\def \emph{ \normalem }
\newtheorem{thm}{Theorem}[section]
\newtheorem{lem}[thm]{Lemma}
\newtheorem{prop}[thm]{Proposition}
\theoremstyle{definition}
\newtheorem{defn}[thm]{Definition}
\theoremstyle{remark}
\numberwithin{equation}{section}
\newcommand{\norm}[1]{\left\Vert#1\right\Vert}
\newcommand{\Norm}[1]{\lvert \! \lvert \! \lvert #1 \rvert \! \rvert \! \rvert}
\newcommand{\abs}[1]{\left\vert#1\right\vert}
\newcommand{\set}[1]{\left\{#1\right\}}
\newcommand{\parr}[1]{\left (#1\right )}
\newcommand{\brac}[1]{\left [#1\right ]}
\newcommand{\ip}[1]{\left \langle #1 \right \rangle }
\newcommand{\Real}{\mathbb R}
\newcommand{\too}{\rightarrow}
\newcommand{\A}{\mathcal{A}}
\newcommand{\bbar}[1]{\overline{#1}}
\newcommand{\wt}[1]{\widetilde{#1}} 
\newcommand{\wh}[1]{\widehat{#1}} 
\newcommand{\dpc}{\mbox{\rm{d}}_P}
\newcommand{\dcp}{\mathbf{D}_{cP}}
\def \vol{\mathrm{vol}}
\def \R{\mathcal{R}} 
\def \C{\mathcal{C}} 
\def \D{\mathcal{D}} 
\def \CC{\mathbb{C}} 
\def \Mob{\mbox{\it Mob}}
\def \A{\mathcal{A}}
\def \M{\mathcal{S}}
\def \N{{\mathcal{S}'}}
\def \NN{{\mathbb{N}}}
\def \L{\mathcal{S}''}
\def \pr{Pr} 
\def \bfi{\textbf{\footnotesize{i}}} 
\def \gd{\mbox{\bf{d}}} 
\newcommand{\grad}{\text{grad }}
\newcommand{\divergence}{\text{div }}
\definecolor{darkred}{rgb}{0.6,0,0}
\begin{document}
\title{The continuous Procrustes distance between two surfaces}
\author{Yaron Lipman, Reema Al-Aifari, Ingrid Daubechies \\ Princeton University }%
\begin{abstract}
The Procrustes distance is used to quantify the similarity or dissimilarity of (3-dimensional)
shapes, and extensively used in biological morphometrics. Typically each (normalized) shape
is represented by $N$ landmark points, chosen to be homologous (i.e. corresponding to each other), as much as possible, and the Procrustes
distance is then computed as $\inf_{R}\,\sum_{j=1}^N\, \|Rx_j-x'_j\|^2$, where the minimization is over all Euclidean transformations, and the correspondences $x_j \leftrightarrow x'_j$
are picked in an optimal way. \\
This (discrete)
Procrustes distance is easy to compute but 
has drawbacks -- representeding a shape by only a finite number of
points, which may fail to capture all the geometric aspects of interest; a need has been expressed
for alternatives that
are still computationally tractable.
We propose in this paper the concept of 
{\bf {continuous Procrustes distance}}, and prove that it provides a true metric
for two-dimensional surfaces embedded in three dimensions. The continuous Procrustes 
distance leads to a hard optimization
problem over the group of area-preserving diffeomorphisms. One of the core 
observations of our paper is that for small continuous Procrustes distances, 
the global optimum of the Procrustes distance can be uniformly approximated 
by a conformal map. This observation leads to an
efficient algorithm to calculate approximations to this new distance.
\end{abstract}
\maketitle


\section{Introduction}
Procrustes distances are used to compare shapes and quantify their
(dis)similarity. In several applications, such as geometric
morphometrics \cite{Mitteroecker09}, the shapes to be compared are
continuous surfaces, on each of which homologous landmark points are
selected, equal in number. The dissimilarity or distance between the
surfaces $\mathcal{S}$ and $\mathcal{S}'$ is then computed as the
{\it Procrustes distance} between their corresponding landmark
sequences $X=\left(x_{\ell}\right)_{\ell=1}^L$ and
$X'=\left(x'_{\ell}\right)_{\ell=1}^L$, which is defined as follows.
%
%
\begin{defn}
Given two finite sequences $X=\left(x_i\right)_{i=1}^n$,
$X' = \left(x'_i\right)_{i=1}^n$ in $\Real^d$, of equal length, with
centroids $\overline{x}$, $\overline{x'}$, and centroid sizes
$S_X$, $S_{X'}$, respectively\footnote{
The {\it centroid} of $X$ is given by $\overline{x}=n^{-1}\sum_{i=1}^n\,x_i\,$; the
{\it centroid size} by $S_X= \left[n^{-1}\sum_{i=1}^n\,(x_i-\overline{x})^2 \right]^{1/2}$
},
the classical Procrustes distance $d_P(X,X')$ between $X$ and $X'$ is defined by
\begin{equation}\label{e:discrete_Procrustes_classical}
    d_P(X,X') = \inf_{R\in \R}
\parr{\sum_{i=1}^n \norm{
    \frac{Rx_i}{S_X}-\frac{x'_i}{S_{X'}}}^2}^{1/2},
\end{equation}
where $\R$ is the group of Euclidean
transformations
(reflections,rotations, and translations).
\end{defn}

In some applications, it may be useful to consider {\it weighted
Procrustes distances}, in which each label $i \in \{1,\ldots,L\}$
can be given its own weight $w_i$ in the computation of the
centroids, the centroid sizes and the distance $d_P(X,X')$; such
weighting can be used to compensate, if desired, for possible
imbalances in the distribution of the landmark points, when they
occur more densely in some areas than in others. We shall assume in
what follows that no such adjustment is needed, i.e., that the
landmark points are considered (more or less) uniformly distributed.
The normalization by the centroid size allows comparison of shapes
irrespective of their scale. To achieve this without a normalization
step, one would need to extend $\R$ to the larger group of
similarities, incorporating the (uniform) dilations as well.
Note that other geometric extensive quantities
could be used to normalize, with a very similar effect.

The point sets $X$ and $X'$ are said to have the same shape if
one can be obtained from the other by an appropriate combination of scaling, translating,
rotating and (possibly) mirroring, i.e., if there exists $R\in \R$ and $\alpha \in \Real$
such that $\alpha RX=X'$. (``Shapes'' of finite sets of points
can thus be considered as orbits
of these point sets under the action of similarity operations.)
It is not hard to check that the Procrustes distance $d_P(\cdot,\cdot)$
is a metric on shapes, in the sense that it satisfies, for all finite point sequences $X$,
$X'$ and $X''$
in $\Real^d$ of equal length,\\
1) $d_P(X,X')=d_P(X',X)$, \\
2) $d_P(X,X')\geq 0 $, and $d_P(X,X')=0$ implies
that $X$ and $X'$ have the same shape, \\
3) $d_P(X,X'')\leq d_P(X,X') + d_P(X',X'')$.

In all the above, the points $x_i$ and $x'_i$ are ordered, i.e. corresponding points in
$X$ and $X'$ have the same index. The correspondence between entries of $X$ and $X'$ can
be ``encoded'' by a bijective
correspondence map $\C: X \too X'$ that maps each
$x_j \in X$ to its corresponding $x'_j \in X'$. In terms of this correspondence map the
Procrustes distance $d_P(X,X')$ can be written as
\begin{equation}\label{e:discrete_Procrustes_with_corrmap}
    d_P(X,X') = d_P(X,\C X) = \inf_{R\in \R}\, \Norm{RX-\C X}
   := \inf_{R\in \R} \parr{\sum_{i=1}^n \norm{R
    x_i-\C x_i}^2}^{1/2};
\end{equation}
this recasts the minimization as a search for the map in $\R$ that best
approximates $\C$, insofar as its action on $X$ is concerned.

Using a finite set $X$ of landmark points $x_{\ell} \in \mathcal{S},\,\ell=1,\ldots,L$ as a proxy
for the shape of a surface $\mathcal{S}$, and taking the value of $d_P(X,X')$ to express the
dissimilarity of the shapes of surfaces $\mathcal{S}$ and $\mathcal{S}'$ has some drawbacks, however.
First, this approach compares only small discrete subsets of points sampled
from the surfaces and therefore ignores ``most'' of their shapes.
Second, and more importantly, it requires the user to carefully
select corresponding landmark points on the two surfaces prior to
calculating the Procrustes distance between the two landmark point sequences.
This distance depends heavily on the exact choice of the landmark points. In
geometric morphometrics, one seeks to remove some of the arbitrariness of
these choices by picking landmark points that are believed to be {\it homologous}, i.e.,
to truly correspond to each other, based on evolutionary arguments.
This type of selection of landmark points requires considerable specialized expertise,
and in some cases, even experts do not agree. In addition, morphologists interested
in studying function of e.g. teeth are interested in moving away from landmark selection,
and in using geometric information that encompasses more global features.

This situation has motivated researchers to suggest alternative methods
to compute distances or dissimilarities between the shapes of surfaces.
Even when these methods are based on continuous concepts, their numerical
implementation requires some type of discretization, and thus often involves
again discrete point sets $X$ and $X'$ (typically of larger cardinality than
in landmark-based distances).
The resulting
distance can then still be written in the same form as the right hand side of
(\ref{e:discrete_Procrustes_with_corrmap}), with the important difference that
$\C$ is no longer assumed to be given {\it a priori}. Instead, the map $\C$ is
assumed to be determined by the full geometry of the surfaces $\mathcal{S}$ and $\mathcal{S}'$;
in practice, it has to be derived from the data themselves, meaning that both the
correspondence $\C$ and the Euclidean transformation $R$ must be determined
numerically. (One could imagine a similar situation in the discrete case, if two sets
$X$ and $X'$ were given, each with $n$ points, without a correspondence map. In that case,
a reasonable approach might be to select the map $\C: X \too X' \,$ for which
$\Norm{\C X - X'}$ is smallest.)

A prominent method of this type is
the Iterative Closest Point (ICP) algorithm
\cite{Besl92}. This method alternates between determining $\C$ and $R$:
the correspondence
$\C_k:X\too X'$ is taken to associate to each point
$x' \in X'$ the point(s) in $X$ for which the image $R_{k-1} x$, under
the best rigid alignment of $X$ and $X'$ obtained in the previous iteration,
is closer to $x'$ than to any other element in $X'$; the rigid alignment $R_k$
is then the transformation $R \in \R$ that minimizes the distance $\Norm{RX - \C_k X}$.
This algorithm is simple and robust, but
suffers from several drawbacks. It may converge to a local
minimum rather than the desired minimizer; this means that the limit
may depend on the choice of the
initial correspondence map $\C_1$ or the initial rigid alignment $R_0$, whichever is
picked to start off the algorithm.
Of more concern is that the space of
possible correspondences $\C:\mathcal{S}\too\mathcal{S'}$ considered by the algorithm
consists only of compositions of rigid
motions and closest neighbor maps. This space of maps often
contains high-distortion and discontinuous mappings, as illustrated
in Figure \ref{fig:diffeo_vs_icp} for the 1-dimensional situation; it also
does not include a sufficiently rich set of diffeomorphisms
(smooth bijective mappings).

\begin{figure}[h]
\begin{centering}
\includegraphics[width=0.75 \columnwidth]{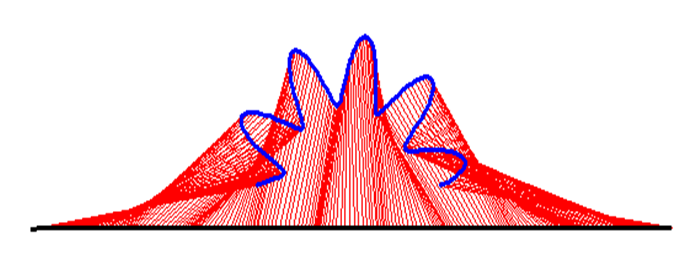} \\
(a) The correspondence map $\C_1:X \too X'$, in this case a
length-preserving diffeomorphism. ($\M$ and $\N$
each consist of 200 points, equispaced on the black horizontal line
($X$) and on the blue curve ($X'$),
respectively.)\\
\includegraphics[width=0.75 \columnwidth]{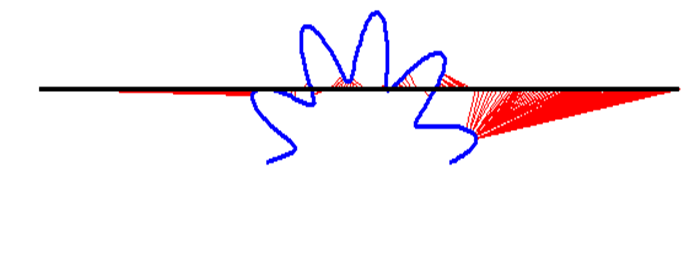}
\vspace*{-1.2 cm}

(b) Illustration of the Euclidean map $R_1$ (moving the black line ``up'')
and the correspondence map $\C_2$.\\
\end{centering}
\caption{{\bf One-dimensional illustration of one step in the ICP algorithm.}
$X$ and $X'$ are point sets, each with 200 points,
on two curves (``one-dimensional surfaces'') $\mathcal{S}$ (straight, black)
and $\mathcal{S}'$ (wiggly, blue) of equal length. 
(a) The correspondence (in red) between $X$ and $X'$ that associates to
each point $x' \in X'$ the point $x \in X$ at the same arclength
distance from the left end point of its curve. 
(b) Using this correspondence as an initial $\C_1:X \too X'$,
determine the Euclidean transformation (now a simple translation in the plane) $R_1$
that minimizes $\Norm{RX-\C_1X}$, and move $X$ to $R_1X$; the red lines now
link each $R_1x \in R_1X$ to the closest point in $X'$.
The corresponding map $\C_2: X \too X'$
is discontinuous and highly distorting.}
\label{fig:diffeo_vs_icp}
\end{figure}

Several authors have built extensions or generalizations of
this approach, retaining the basic iterative principle of ICP,
interleaving the determination of
correspondences $\C_k$ and
transformations $R_k$ in successive steps. \\
Rangarajan et al.~\cite{Rangarajan97thesoftassign} formulate a
variant on the Procrustes distance between two discrete sets of
points in which the correspondence maps are unknown {\it a priori}.
Their algorithm alternates between calculating optimal rotations
and determining correspondence maps (bi-measures). For every fixed
rotation $R$, it computes 
the ``measure coupling numbers'' $M_{ij}$ from one point set to another,
minimizing the average of the squared residuals $\sum_{i,j} M_{ij}
\norm{Rx_i - x'_j}$, under the (soft) constraint that 
$\left(M_{ij}\right)_{i,j=1,\ldots,n}$ is
indeed a measure coupling. 
As is the case with ICP, this algorithm can still converge to a local 
rather than a global minimum, and
the correspondence maps can
still be ``discontinuous and/or distorting''. Ghosh et
al.~\cite{ghosh:2009} use a similar framework (although not related
to Procrustes or any other distance) with a smooth
surface deformation mechanism together with closest point maps to
determine both the correspondence maps and the transformations in an alternating
iterative procedure.
The algorithm in \cite{ghosh:2009} requires user initialization (which may influence the outcome);
the way
correspondences are assigned can lead the deformation mechanism to ultimately produce
a distorting and/or discontinuous map between the surfaces.

A common characteristic of the algorithms mentioned above, which
often (in the limit or in intermediate stages) lead to discontinuous
or distorting correspondence maps, is that the space they explore
(implicitly or explicitly) to build correspondence maps is
insufficiently rich in smooth bijections.

In this paper we generalize the discrete Procrustes distance
to continuous surfaces; in this formulation we use only {\it smooth} correspondence maps. 
Our construction
leads to a non-linear functional over a huge and non-linear space of
possible maps that we call the {\it continuous Procrustes functional}. 
Direct optimization, over its huge domain, of this generalized
Procrustes functional is not feasible; we suggest that for many cases
of interest, a different optimization suffices, over
a (relatively) much smaller (and managable) subset of all possible maps,  consisting
of conformal mappings combined with specific area-preserving maps. One of the main results
of our paper is a proof that the class of conformal maps
{\it uniformly} approximates the globally optimal correspondence
map in the regime where the continuous Procrustes functional takes on
small values. Note that our approach thus provides a glimpse of
the global minimizer (for the case of small continuous Procrustes
distance) of a functional for which it is not known, in general, 
how to approximate the
global minimizer in polynomial time. \\
In addition to the theoretical
constructions, we also provide an efficient algorithm, without 
user interaction, to construct (an approximation to) the continuous Procrustes 
distance and the optimal correspondence map between two surfaces, again in the case where this
distance is small, i.e. where the surfaces are not too dissimilar. 
In practice, this algorithm performs very well, and is sufficiently fast to 
be used for the computation of pairwise distances for all pairs in reasonably large
collections of surfaces ($\sim 100$); see \cite{lipman_at_nordia_2010}, a first presentation of
the main results of this paper at a workshop in June 2010, as well as \cite{pnas}, which
uses the algorithm explained here in detail for three biological data-sets. 

A similar combination (conformal mappings composed with
area-preserving maps) is used in the recent paper by Dominitz and
Tannenbaum \cite{Dominitz10}, to construct good mappings from
surfaces to a Euclidean spherical domain. The goal of
\cite{Dominitz10} is different, however; rather than seeking to
define a distance between surfaces, that can be used for shape
alignment, \cite{Dominitz10} is concerned with building a low
distortion map from a surface to Euclidean domain, the inverse of
which can then be used as a good parameterization for the surface.

The paper is organized as follows. In section 2, we introduce our
definition of the continuous Procrustes distance for homeomorphic
2-dimensional compact surfaces embedded in $\Real^3$; it involves a
minimization that is unfeasible in practice. In section 3, we show
that we can construct approximations of this distance by minimizing over appropriate
perturbations of conformal mappings, which is much more tractable.
In section 4, we give the corresponding numerical algorithm and 
illustrate them with a concrete example.

\section{The Surfaces Procrustes Distance.}
Consider two homeomorphic
compact 2-dimensional surfaces $\M, \, \N$ embedded in $\Real^3$,
%
%
endowed with the standard metric induced from their
embedding. Because the biological applications that motivated this work
require comparing shapes irrespective of scale (see \cite{Mitteroecker09} and reference
therein),
we are interested in defining a scale-independent distance. We shall
therefore assume that the two surfaces are normalized to have
unit volume
(area):
\[
\int_\M d\vol_\M = 1 = \int_{\N} d\vol_{\N}.
\]
In building ``good'' correspondence maps between the continuous surfaces,
we will be guided by what represents a``good'' correspondence between discrete
(relatively small) sets of points
that ``represent'' the surfaces when standard Procrustes distances are used. \\
As mentioned in the introduction, great care is typically taken
in the choice of sample points on surfaces that will then be used
to compare the
shapes of these surfaces. Landmark points on e.g. teeth or other bones
are chosen so that they
are homologous, i.e. ``equivalent'' from an evolutionary point of view.
We are aiming for a landmark-free method;  information of this type will thus
not be available. Instead, we can use only geometric information given by the
surface itself. Note that in (\ref{e:discrete_Procrustes_classical}), the different
points $x_i$ all play an equal role.
When choosing discrete sets $X$, $X'$, each consisting of $n$ points, on the surfaces
$\M$ and $\N$ to represent their respective ``shapes'', with the purpose of using them in a
Procrustes distance calculation (\ref{e:discrete_Procrustes_classical}),
it seems therefore reasonable to pick the points so that
each represents an equal ``share'' of the surfaces; we shall interpret
this here as representing an equal portion of the {\it area} of the surfaces.
A correspondence map $\C$ that maps each $x_i$ to its partner $x_i'$ can thus be interpreted
as mapping portions of area $1/n$ of $\M$ to the corresponding
portions of $\N$ that have equal area; on the other hand, the sum
in (\ref{e:discrete_Procrustes_classical}) can be viewed (up to a normalization) as a
Riemann sum approximation to the integral of $\norm{Rx-\C x}$ over $\M$.

This analysis suggests the following ``continuous analogue'' of the discrete
construction.
To involve the whole surface $\M$
(instead of just a set of sample points), we take $\C$ to be an area-preserving map
from $\M$ to $\N$;  for each fixed area-preserving $\C$, we then define
\begin{equation}
\dpc (\M,\N;\C)=\inf_{R \in \R}\,\left(\, \int_{\M} \, \norm{Rx-\C x }^2 \,d\vol_{\M}(x)\, \right)^{1/2}\,.
\label{eq:dpc}
\end{equation}
In the absence of landmark-type or other user-guided information we
have to select $\C$ based solely on geometric information. Taking
our cue from the discrete case, we want, given points
$\left(x_i\right)_{i=1,\ldots,n}$ on $\M$, to determine
$\left(x'_i\right)_{i=1,\ldots,n}$ on $\N$ so that each $x'_i$
corresponds ``as well as possible'' to $x_i$. In other words, this
suggests that $\C$ be picked so that $\N$ and $\C \M$ are optimally
aligned, and that the continuous Procrustes distance be given by the
corresponding value of $\dpc(\M,\N;\C)$. More explicitly, defining
$\A(\M,\N)$ to be the set of diffeomorphisms (smooth bijective maps
with a smooth inverse) from $\M$ to $\N$ that are area-preserving,
we set
\begin{equation}
\dcp(\M,\N)= \inf_{\C\in \A(\M,\N)} \,\left[\,
\inf_{R \in \R}\,\left(\, \int_{\M} \, \norm{Rx-\C x }^2 \,d\vol_{\M}(x) \, \right)^{1/2}
\, \right]\,.
\label{eq:cont_P_dist}
\end{equation}

In the remainder of this section, we establish several properties for the quantities defined in
(\ref{eq:dpc}) and (\ref{eq:cont_P_dist}), establishing, among other results, that
$\dcp(\cdot,\cdot)$ defines a metric distance.

We start by proving that the minimum in (\ref{eq:dpc}) is always
attained.
\begin{prop}\label{prop:minimum_R_attained}
Given two homeomorphic surfaces $\M$, $\N$ of unit area, and an area-preserving map $\C$ from
$\M$ to $\N$, there exists a rigid motion $R^*\in \R$ minimizing the functional
$\int_{\M} \, \norm{Rx-\C x }^2 \,d\vol_{\M}(x)\,$.
\end{prop}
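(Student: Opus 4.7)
The plan is to exploit the product structure of the Euclidean group $\R$: every $R \in \R$ can be uniquely written as $Rx = Qx + t$ with $Q \in O(3)$ and $t \in \Real^3$. The orthogonal group $O(3)$ is compact but $\Real^3$ is not, so the main move is to eliminate the translation variable in closed form before invoking compactness.

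First, I would expand
\[
F(Q,t) := \int_\M \norm{Qx + t - \C x}^2 \, d\vol_\M(x)
\]
as a quadratic polynomial in $t$. Using $\vol(\M)=1$, the coefficient of $\norm{t}^2$ equals $1$ and the linear part equals $2\ip{t,\, Q\bar{x} - \bar{y}}$, where $\bar{x} := \int_\M x \, d\vol_\M(x)$ and $\bar{y} := \int_\M \C x \, d\vol_\M(x)$. Because $\C$ is area-preserving, the change-of-variables formula identifies $\bar{y}$ with the centroid $\int_\N y \, d\vol_\N(y)$ of $\N$; in particular $\bar y$ does not depend on $Q$. Setting the gradient in $t$ to zero then gives the unique translation $t^\star(Q) = \bar{y} - Q\bar{x}$ minimizing $F(Q,\cdot)$.

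Substituting back produces
\[
g(Q) := F(Q, t^\star(Q)) = c(Q) - \norm{Q\bar{x}-\bar{y}}^2, \qquad c(Q) := \int_\M \norm{x}^2 \, d\vol_\M + \int_\N \norm{y}^2 \, d\vol_\N - 2\int_\M \ip{Qx,\, \C x} \, d\vol_\M(x).
\]
The first two terms in $c(Q)$ are finite constants (by compactness of $\M$ and $\N$, again using that $\C$ is area-preserving to rewrite $\int_\M \norm{\C x}^2 d\vol_\M$ as an integral over $\N$), while the last is a continuous (in fact affine) function of the entries of $Q$ with coefficients bounded via Cauchy--Schwarz. Hence $g$ is continuous on the compact set $O(3)$, attains a minimum at some $Q^\star$, and $R^\star x := Q^\star x + t^\star(Q^\star)$ is the required minimizer in $\R$.

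I do not anticipate a serious obstacle: everything rests on the explicit elimination of $t$ plus the compactness of $O(3)$. An alternative, less explicit route would be to verify joint coercivity of $F$ in $(Q,t)$ and then apply the direct method of the calculus of variations, but the explicit approach above is cleaner and yields the useful by-product $t^\star(Q) = \bar{y} - Q\bar{x}$, which will presumably be exploited in the subsequent metric arguments.
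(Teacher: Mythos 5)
Your argument is correct, but it is not the paper's route. The paper runs the direct method on all of $\R$: it takes a minimizing sequence $R_n=(U_n,t_n)$ viewed as vectors in $\Real^{12}$, argues that $O(3)$ is compact and that the $t_n$ eventually lie in a bounded ball, extracts a convergent subsequence $R_n\too R^\ast$, and passes to the limit in the integral by dominated convergence. You instead eliminate the translation in closed form, $t^\star(Q)=\bar y-Q\bar x$, and then minimize the resulting continuous function $g(Q)$ over the compact group $O(3)$ alone. Your version is more explicit and arguably cleaner: it sidesteps the subsequence extraction and the dominated convergence step, it makes rigorous the boundedness of the optimal translation (which the paper only asserts for a minimizing sequence), and it yields as a by-product the formula for the optimal translation that the paper proves separately (its proposition that $t^\star=0$ when both centroids sit at the origin is exactly your $t^\star(Q)=\bar y-Q\bar x$ specialized to $\bar x=\bar y=0$). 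The paper's sequential argument is in turn slightly more generic, in that it does not rely on the quadratic-in-$t$ structure and would survive replacing the squared norm by other integrands. One cosmetic remark: area-preservation is not actually needed for existence in either proof --- in yours it only serves to rewrite $\bar y$ and $\int_\M\norm{\C x}^2 d\vol_\M$ as integrals over $\N$, but these quantities are finite constants anyway since $\C$ maps into the compact surface $\N$; the identification with the centroid of $\N$ becomes relevant only in the later propositions.
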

\begin{proof}
Let $R_n \in \R$ be a sequence such that
$$\parr{\int_{\M}\norm{R_nx - \C x}^2 d\vol_\M(x)} - d_{P}(\M,\N; \C)^2 < \frac{1}{n}.$$
 Let us represent each rigid motion as a composition
of an orthogonal transformation and a translation: $$R_n x = U_n x +
t_n,$$ where $U_n \in \Real^{3\times 3}$ and $t_n\in \Real^3$.
Thinking of $R_n=(U_n,t_n)$ as a vector in $\Real^{12}$ it is clear
that there exists some compact set $A\subset \Real^{12}$ such
that $R_n \in A$ for all $n$. Indeed, the orthogonal group $O(3)$ in
its representation as a $3\times 3$ matrix group is a compact set,
and for sufficiently large  $n$, the $t_n$ will all lie within some ball,
i.e. $\norm{t_n}<M$ for some $M$. Hence there exists some rigid transformation
$R^*=(U^*,t^*)$ such that, up to extracting a subsequence, $R_n\too R^*$
as $n \too \infty$. Lastly, $R^*$ realizes the infimum $d_{P}(\M,\N;
\C)$ since $\norm{U_n x + t_n - \C x} \too \norm{U^* x + t^*
- \C x}$ for every $x \in \M$, and similar arguments as above imply that
$\norm{U_n x + t_n - \C x}$ is bounded uniformly in $n$ and $x\in \M$.
The result then follows from the dominated convergence theorem.
\end{proof}

The following two propositions provide closed form solutions for
(\ref{eq:dpc}); their proofs follow the discrete case
\cite{Eggert97} in a rather straightforward manner.
Note that we use that $\C$ is area-preserving
to establish these formulas (for the translation part).
First, we show that the translational part $t^*$
takes the centroid of $\M$ to the centroid of $\N$:
\begin{prop}\label{prop:translational_part}
If $\M$ and $\N$ both have their centroids at the origin, i.e. $\int_\M x\, d\vol_\M(x) = 0 =
\int_{\N} y \,d\vol_{\N}(y)$, then the translational part $t^*$ of the optimal
rigid motion $R^*$ is zero.
\end{prop}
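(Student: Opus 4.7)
The plan is to exploit the fact that the Procrustes functional is quadratic in $t$ when $U$ is held fixed, so the translational optimum can be found by a simple first-order condition. Writing $R^* x = U^* x + t^*$, I would consider the auxiliary one-parameter family $R_s x = U^* x + (t^* + s v)$ for arbitrary $v \in \Real^3$, and note that, since $R^*$ is a minimizer, the derivative at $s=0$ of
\begin{equation*}
s \longmapsto \int_{\M} \norm{U^* x + t^* + sv - \C x}^2 \, d\vol_\M(x)
\end{equation*}
must vanish. Expanding the square and differentiating gives the necessary condition
\begin{equation*}
\int_{\M} \ip{v,\, U^* x + t^* - \C x} \, d\vol_\M(x) = 0 \quad \text{for all } v \in \Real^3.
\end{equation*}

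Since $v$ is arbitrary, this is equivalent to the vector identity
\begin{equation*}
U^* \int_{\M} x \, d\vol_\M(x) + t^* \int_\M d\vol_\M(x) = \int_{\M} \C x \, d\vol_\M(x).
\end{equation*}
The first integral on the left vanishes by the centroid hypothesis on $\M$, and the second equals $1$ because $\M$ has unit area. Thus $t^* = \int_\M \C x \, d\vol_\M(x)$.

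The last step is where area-preservation of $\C$ enters: changing variables $y = \C x$ (legitimate since $\C$ is a smooth area-preserving bijection, hence $\C_* d\vol_\M = d\vol_\N$) yields
\begin{equation*}
t^* = \int_{\M} \C x \, d\vol_\M(x) = \int_{\N} y \, d\vol_\N(y) = 0,
\end{equation*}
by the centroid hypothesis on $\N$. This completes the argument. There is no real obstacle here; the only subtlety worth flagging is that the existence of the minimizer $R^*$ must be invoked (Proposition~\ref{prop:minimum_R_attained}) before one is entitled to write down first-order conditions, and that the area-preserving property of $\C$ is used precisely once, in the change-of-variables step that converts the integral of $\C x$ over $\M$ into the centroid of $\N$.
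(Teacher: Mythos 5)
Your proposal is correct and follows essentially the same route as the paper: the first-order condition in the translation variable (the paper differentiates coordinatewise, you use an arbitrary direction $v$, which is the same computation), followed by the centroid hypotheses and the change of variables $y=\C x$ justified by area-preservation. Your extra remark about first invoking Proposition~\ref{prop:minimum_R_attained} to guarantee the minimizer exists is a sensible clarification but does not change the argument.
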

\begin{proof}
Assume the surfaces $\M, \, \N$ are centered as described in the
assumptions of the theorem. Differentiating $\int_\M \norm{Ux+t -
\C x}^2 d\vol_\M(x)$ with respect to each of the coordinates of the
vector $t$ and plugging in $U=U^*$ and $t=t^*$, we get
$$0 = 2\int_\M \parr{U^*x+t^*-\C x} d\vol_\M(x).$$
Rearranging the above equality and remembering that $\M$ and $\N$
have unit area, we get
\begin{align*}
t^* & =  \int_\M  \C x \ d\vol_\M(x) -  \int_\M  U^*x \ d\vol_\M(x) \\
& =  \int_{\N}  y \ d\vol_{\N}(y) -  U^* \int_\M  x \ d\vol_\M(x) = 0 .
\end{align*}
\end{proof}
Next, the orthogonal transformation part:
\begin{prop}\label{prop:rotational_part}
If $\M$ and $\N$ both have their centroids at the origin, i.e. $\int_\M x d\vol_\M(x) = 0 =
\int_{\N} y d\vol_{\N}(y)$, then the optimal orthogonal transformation $U^*$
can be written as
\begin{equation*}\label{e:optimal_orth_trans}
    U^* = WQ^T,
\end{equation*}
where $W,Q$ are the orthogonal transformations from the Singular Value Decomposition (SVD)
\begin{equation*}\label{e:continuous_correlation_matrix_SVD}
    \int_\M x \parr{\C x}^T d\vol_\M (x) = Q S W^T,
\end{equation*}
where $S=diag\parr{\sigma_1,\sigma_2,\sigma_3}$ is a diagonal
matrix with the singular values of $U^*$ on the diagonal.
\end{prop}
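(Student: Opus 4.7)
The plan is to reduce the minimization to a standard linear-algebraic optimization of a trace functional, and then invoke the orthogonal-invariance trick that diagonalizes it via the SVD. Since the centroids are at the origin, Proposition~\ref{prop:translational_part} gives $t^* = 0$, so we only need to minimize $F(U) := \int_\M \norm{Ux - \C x}^2 d\vol_\M(x)$ over orthogonal matrices $U$. Expanding the squared norm and using $U^T U = I$, the terms $\norm{Ux}^2 = \norm{x}^2$ and $\norm{\C x}^2$ are independent of $U$, so
\[
F(U) = \text{const} - 2 \int_\M \ip{Ux, \C x} \,d\vol_\M(x).
\]
Thus minimizing $F$ is equivalent to maximizing
\[
G(U) := \int_\M \ip{Ux, \C x} \,d\vol_\M(x) = \int_\M \mathrm{tr}\!\parr{U x (\C x)^T} d\vol_\M(x) = \mathrm{tr}(U M),
\]
where $M := \int_\M x (\C x)^T \,d\vol_\M(x)$ is the $3\times 3$ correlation-type matrix appearing in the statement.

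Next I would bring in the SVD $M = Q S W^T$. Using cyclicity of the trace,
\[
G(U) = \mathrm{tr}(U Q S W^T) = \mathrm{tr}(W^T U Q \, S).
\]
Setting $V := W^T U Q$, which ranges over $O(3)$ as $U$ does (since $Q$ and $W$ are orthogonal), the problem becomes maximizing $\mathrm{tr}(V S) = \sum_{i=1}^3 V_{ii} \sigma_i$ over $V \in O(3)$. Since $\sigma_i \geq 0$ and $|V_{ii}| \leq 1$ for any orthogonal $V$, the maximum value $\sum_i \sigma_i$ is attained precisely when $V_{ii} = 1$ for each $i$, which forces $V = I$, i.e.\ $U^* = W Q^T$.

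There is essentially no serious obstacle here: the argument is the continuous analogue of the classical discrete Procrustes computation in \cite{Eggert97}, with sums replaced by integrals against $d\vol_\M$. The only points that require a bit of care are the measurability/integrability of the integrand defining $M$ (immediate from $\M$ compact, $\C$ smooth), and the fact that the group $\R$ is allowed to include reflections, so that $U$ genuinely ranges over all of $O(3)$ rather than $SO(3)$; this is exactly what lets us conclude $V = I$ without having to worry about a possible sign flip on the smallest singular value. The singular values of $U^*$ listed as the diagonal of $S$ then coincide with those of $M$, as stated.
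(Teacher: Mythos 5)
Your proof is correct and follows essentially the same route as the paper: reduce to maximizing the trace functional $\mathrm{tr}(UM)$ with $M=\int_\M x(\C x)^T d\vol_\M$, insert the SVD, and bound $\mathrm{tr}(VS)$ by $\sum_i\sigma_i$ over orthogonal $V$, attained at $U^*=WQ^T$. Your added remarks on integrability and on $U$ ranging over all of $O(3)$ are fine but not points where the paper's argument differs.
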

\begin{proof}
Expanding $\int_\M \norm{U x-\C x}^2 d\vol_\M(x)$ we get:
\begin{align*}
&\int_\M \norm{U x-\C x}^2 d\vol_\M(x) =   \\  & \int_\M \norm{x}^2
d\vol_\M(x) - 2\int_\M x^T U^T \C x \ d\vol_\M(x) + \int\norm{\C x}^2
d\vol_\M(x),
\end{align*}
where we used that $U^T U = Id$. The sought-for $U^*$
therefore must maximize
\[
E(U) = \int_\M x^T U^T \C x \ d\vol_\M(x).
\]
Note that
$$x^T U^T \C x = tr \parr{x^T U^T \C x} = tr \parr{U x \parr{\C x}^T},$$
and therefore
\begin{align*}
E(U) & = tr \brac{ U \int_\M x \parr{\C x}^T  \ d\vol_\M(x)}
= tr \brac{U Q S W^T} \\ & = tr \brac{W^T U Q S} = tr \brac{\wt{U} S}
 = \wt{U}_{1,1} \sigma_1 + \wt{U}_{2,2} \sigma_2 + \wt{U}_{3,3}
\sigma_3,
\end{align*}
where $\wt{U} := W^T U Q$, and we used the SVD decomposition.
 Note that the last term cannot be greater than
$\sigma_1+\sigma_2+\sigma_3$ since all entries of the orthogonal
matrix $\wt{U}$ have absolute value at most 1.
Further note that taking $U = W Q^T$ achieves this upper bound. The
uniqueness is also clear.
\end{proof}

We now prove:
\begin{prop}
For each fixed area-preserving map $\C$ from $\M$ to $\N$, we have

\hspace*{.5 cm}{\rm (1)} $\dpc(\M,\N;\C)\geq 0 $\\
\hspace*{.5 cm}{\rm (2)} $\dpc(\M,\N;\C)\,= \,\dpc(\N,\M;\C^{-1})$ \\
\hspace*{.5 cm}{\rm (3)} $\dpc(\M,\N;\C)\,= 0$  implies that $\M$ and $\N$ are congruent.

Moreover, if $\L$ is a third surface, and $\C'$ is an area-preserving map from $\N$ to $\L$, then

\hspace*{.5 cm}{\rm (4)} $\dpc(\M,\L;\C'\circ \C)\leq \dpc(\M,\N;\C)\,+\, \dpc(\N,\L;\C')$.
\label{thm:continuous_proscrustes_is_a_metric}
\end{prop}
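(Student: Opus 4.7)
The plan is to handle the four properties in order, treating (1)--(3) as short warm-ups and reserving the main work for the triangle inequality (4).

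Property (1) is immediate: $\dpc(\M,\N;\C)$ is an infimum of non-negative quantities. For (3), Proposition \ref{prop:minimum_R_attained} says the infimum is attained by some $R^* \in \R$. If $\dpc(\M,\N;\C) = 0$, then $\int_\M \|R^*x - \C x\|^2 d\vol_\M(x) = 0$, so $R^*x = \C x$ for $\vol_\M$-a.e. $x$. Since $\C$ is a diffeomorphism and $R^*$ is a rigid motion, both sides are continuous, so the equality holds everywhere, and hence $\N = \C(\M) = R^*(\M)$, proving that $\M$ and $\N$ are congruent.

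For (2), the essential ingredient is the area-preserving change of variables $y = \C x$, which gives
\begin{equation*}
\int_\M \|Rx - \C x\|^2 d\vol_\M(x) = \int_\N \|R \C^{-1}(y) - y\|^2 d\vol_\N(y).
\end{equation*}
Since $R$ is an isometry of $\Real^3$, we can apply $R^{-1}$ inside the norm without changing it, obtaining $\|\C^{-1}(y) - R^{-1}(y)\|^2$. As $R$ runs over $\R$, so does $R^{-1}$, so taking the infimum over $R$ on both sides yields $\dpc(\M,\N;\C) = \dpc(\N,\M;\C^{-1})$.

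Property (4) is where the main effort lies. Let $R_1^*$ be optimal for $(\M,\N;\C)$ and $R_2^*$ for $(\N,\L;\C')$, both provided by Proposition \ref{prop:minimum_R_attained}. Using $R = R_2^* \circ R_1^*$ as a (non-optimal but admissible) candidate for $\dpc(\M,\L;\C'\circ\C)$, I would split
\begin{equation*}
\|R_2^*R_1^*x - \C'\C x\| \le \|R_2^*R_1^*x - R_2^*\C x\| + \|R_2^*\C x - \C'\C x\| = \|R_1^*x - \C x\| + \|R_2^*\C x - \C'\C x\|,
\end{equation*}
where the equality exploits that $R_2^*$ is an isometry. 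Taking $L^2(\M, d\vol_\M)$ norms and applying Minkowski's inequality gives an upper bound on $\dpc(\M,\L;\C'\circ\C)$ by the sum of two terms; the first is exactly $\dpc(\M,\N;\C)$, and for the second I would again invoke the area-preserving change of variables $y = \C x$ to rewrite
\begin{equation*}
\int_\M \|R_2^*\C x - \C'\C x\|^2 d\vol_\M(x) = \int_\N \|R_2^*y - \C'y\|^2 d\vol_\N(y) = \dpc(\N,\L;\C')^2.
\end{equation*}
The main subtlety here, and the step I expect to need the most care, is the repeated systematic use of the area-preserving property of $\C$ (and of $\C'$, in variant forms of the argument); without it, the change-of-variables step fails and neither symmetry nor the triangle inequality can be cleanly reduced to the pointwise inequality plus Minkowski. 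Everything else is essentially bookkeeping.
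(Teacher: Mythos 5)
Your proof is correct and follows essentially the same route as the paper's: (2) and the second term in (4) rest on the area-preserving change of variables combined with the isometry property of Euclidean motions, (3) uses Proposition \ref{prop:minimum_R_attained} plus continuity, and (4) is the pointwise triangle inequality followed by Minkowski. The only cosmetic difference is that in (4) you use the attained minimizers $R_2^*\circ R_1^*$ (justified by Proposition \ref{prop:minimum_R_attained}), whereas the paper inserts an arbitrary $\wt{R}\in\R$ and takes the infimum over $\wt{R}$ at the end; both are equally valid.
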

\begin{proof}
First, it is clear that
$\dpc(\M,\N ; \C) \geq 0$.
If $\dpc(\M,\N ; \C) = 0$, then we know by Proposition \ref{prop:minimum_R_attained}
that there exists a rigid transformation $R^*$ such that
$$\int_\M \norm{R^* x - \C x}^2d\vol_\M(x) = 0.$$
Since we
are dealing with smooth surfaces $\M$, this implies that $\norm{R^*x-\C x}=0$ for all $x\in \M$;  since the range of $\C$ is all of $\N$ (because $\C$ is a bijective
diffeomorphism) it follows that $R^*\M$ and $\N$ are equal as sets, so
$\M$ and $\N$ are congruent.

Next we prove symmetry:
\begin{align*}
\dpc(\M,\N;\C)^2 & = \inf_{R\in\R} \int_{\M}\norm{Rx - \C x}^2 d\vol_{\M}(x)
= \inf_{R\in\R} \int_{\N}\norm{R\C^{-1}y - y}^2 d\vol_{\N}(y) \\ & =
\inf_{R\in\R} \int_{\N}\norm{\C^{-1}y - Ry}^2 d\vol_{\N}(y) = \dpc(\N,\M;
\C^{-1})^2,
\end{align*}
where the second equality uses the fact that $\C$ is
area-preserving.

For arbitrary $\wt{R}\in\R$, we have
\begin{align*}
 & \dpc(\M,\L;\C'\circ \C) = \inf_{R\in\R} \parr{\int_{\M}\norm{Rx -
\C' \circ \C x}^2 d\vol_\M(x)}^{1/2} \\ & \leq \inf_{R\in\R}
\set{\parr{\int_{\M}\norm{Rx - \wt{R}\C x}^2 d\vol_\M(x)}^{1/2} }+
\parr{\int_{\M}\norm{\wt{R}\C x - \C'\circ \  \C x}^2 d\vol_\M(x)}^{1/2}
\\ & =
\inf_{R\in\R} \set{\parr{\int_{\M}\norm{Rx - \C x}^2
d\vol_\M(x)}^{1/2}} +
\parr{\int_{\N}\norm{\wt{R}y - \C' y}^2 d\vol_{\N}(y)}^{1/2}
\\ & =
\dpc(\M,\N; \C) +
\parr{\int_{\N}\norm{\wt{R}y - \C' y}^2 d\vol_{\N}(y)}^{1/2}.
\end{align*}
By taking the infimum over all $\wt{R} \in \R$ we obtain the desired
result.
\end{proof}

Having established these properties for $\dpc(\M,\N;\C)$, we can now
minimize this over $\C \in \A(\M,\N)$, i.e. we have
\begin{equation}\label{e:procrustes_and_correspondence}
    \dcp (\M,\N) = \inf_{\C\in\A(\M,\N)} \dpc (\M,\N ; \C);
\end{equation}
if the infimum is achieved by some $\C^*\in\C$, we declare this $\C^*$
to be our desired correspondence map.

Whether such a minimizer exists is a delicate question, and we do
not have a proof or counter example for the general case. However, if
we restrict the class of maps $\C$ to
bi-Lipschitz maps with some {\it a priori} bound on the  maximal dilation,
then such a
minimizer does indeed exist; moreover this minimizer is also bi-Lipschitz with the same
bound.

\begin{prop}
For arbitrary $B>0$,  
let $\mathcal{B}_B(\M,\N)$ be the set of bi-Lipschitz area-preserving
diffeomorphisms from $\M$ to $\N$ such that, for all $x$, $y \in
\M$, $B^{-1}\, \gd_{\M}(x,y) \leq \gd_{\N}(\C x, \C y) \leq B
\gd_{\M}(x,y)$,  and let $\gd_{\M}$, resp.
$\gd_{\N}$ denote the geodesic distances on $\M$, resp. $\N$. Then
there exists a minimizer in $\mathcal{B}_B(\M,\N)$ for the
restriction to $\mathcal{B}_B(\M,\N)$ of the functional $\dcp (\M,\N
; \cdot )$.
\end{prop}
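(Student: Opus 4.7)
The plan is to apply the direct method of the calculus of variations to the functional $\C \mapsto \dpc(\M,\N;\C)$ restricted to $\mathcal{B}_B(\M,\N)$. First I would pick a minimizing sequence $(\C_n)_n \subset \mathcal{B}_B(\M,\N)$. Because every $\C_n$ is $B$-Lipschitz from the compact metric space $(\M,\gd_\M)$ into the compact $(\N,\gd_\N)$, the family is equicontinuous and uniformly bounded, so Arzel\`a--Ascoli produces a subsequence (still denoted $\C_n$) converging uniformly to some continuous $\C^*:\M\to\N$.

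Next I would check that $\C^* \in \mathcal{B}_B(\M,\N)$. Passing to the limit in the bi-Lipschitz inequalities $B^{-1}\gd_\M(x,y)\leq \gd_\N(\C_n x,\C_n y)\leq B\gd_\M(x,y)$ gives the same bounds for $\C^*$; the lower bound forces injectivity, while $\C_n(\M)=\N$ together with uniform convergence and compactness of $\M$ yields $\C^*(\M)=\N$. For area-preservation I would compare pushforward measures: for any continuous $f$ on $\N$, dominated convergence gives $\int_\M f\circ\C_n\,d\vol_\M \to \int_\M f\circ\C^*\,d\vol_\M$, so the pushforwards $(\C_n)_*\vol_\M$ converge weak-$\ast$ to $(\C^*)_*\vol_\M$; since each $(\C_n)_*\vol_\M=\vol_\N$, the limit must also be $\vol_\N$.

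Then I would establish lower semicontinuity of $\dpc(\M,\N;\cdot)$ along uniform convergence. For each fixed $R\in\R$, uniform convergence of $\C_n\to\C^*$ and the uniform bound on $\norm{Rx-\C_n x}$ over the compact $\M$ give $\int_\M \norm{Rx-\C_n x}^2 d\vol_\M \to \int_\M \norm{Rx-\C^* x}^2 d\vol_\M$, uniformly for $R$ ranging over any compact subset of $\R$. By the argument of Proposition \ref{prop:minimum_R_attained}, the optimal $R_n^*$ for $\C_n$ lie in one fixed compact subset of $\R$ (after centring via Propositions \ref{prop:translational_part} and \ref{prop:rotational_part}); extracting $R_n^*\to R_\infty^*$ gives $\dpc(\M,\N;\C^*)^2 \leq \int_\M \norm{R_\infty^* x - \C^* x}^2 d\vol_\M = \lim_n \dpc(\M,\N;\C_n)^2$, which is exactly what we need.

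The main subtlety I anticipate concerns the smoothness built into the definition of $\mathcal{B}_B(\M,\N)$: an Arzel\`a--Ascoli limit of diffeomorphisms is a priori only bi-Lipschitz. Happily, the persistence of the bi-Lipschitz bounds together with Rademacher's theorem already ensures that $\C^*$ is differentiable almost everywhere, with Jacobian of absolute value one, so the conclusion is secured once ``diffeomorphism'' is read in the bi-Lipschitz sense; a mollification/perturbation argument can, if required, upgrade $\C^*$ to a genuinely smooth member of a slightly enlarged class without changing the value of the infimum.
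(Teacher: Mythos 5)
Your proof is correct and follows essentially the same route as the paper: take a minimizing sequence, use the uniform bi-Lipschitz bound to get equicontinuity and hence Arzel\`a--Ascoli compactness of $\mathcal{B}_B(\M,\N)$ in the uniform topology, and use continuity (you argue lower semicontinuity via a compact family of optimal rigid motions, which amounts to the same thing) of $\dpc(\M,\N;\cdot)$ under uniform convergence. If anything you are more careful than the paper on the one delicate point: the paper simply declares it ``straightforward'' that $\mathcal{B}_B(\M,\N)$ is closed in $C(\M,\N)$, whereas you note explicitly that a uniform limit of smooth bi-Lipschitz area-preserving diffeomorphisms is a priori only a bi-Lipschitz area-preserving homeomorphism, so the conclusion should either be read in that weaker sense or be patched by an approximation argument.
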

\begin{proof}
It is straightforward that $\mathcal{B}_B(\M,\N)$ is a closed subset
of $C(\M,\N)$, the set of continuous functions from $\M$ to $\N$,
equipped with the topology of uniform convergence with respect to
$\gd_{\M}$ and $\gd_{\N}$. By the definition of
$\mathcal{B}_B(\M,\N)$, the functions in
$\mathcal{B}_B$ are equicontinuous. It then follows from the Ascoli-Arzela theorem for the continuous functions on compact metric spaces that $\mathcal{B}_B(\M,\N)$ is compact. \\
It is also easy to see that the functional $\dpc (\M,\N ; \cdot )$ is continuous with respect to the
topology of uniform convergence on $C(\M,\N)$. It follows that the restriction of
$\dpc (\M,\N ; \cdot )$ to $\mathcal{B}_B(\M,\N)$ is a continuous map from a compact space to
$\Real$. Let now $\left(\C_n\right)_{n \in \mathbb{N}}$ be a minimizing sequence in
$\mathcal{B}_B(\M,\N)$, i.e. $\dpc (\M,\N ; \C_n ) \too \inf_{\C \in \mathcal{B}_B(\M,\N)}
\dpc (\M,\N ;\C)$ as $n\too \infty$. By the compactness of $\mathcal{B}_B(\M,\N)$, the sequence
$\left(\C_n\right)_{n \in \mathbb{N}}$ has a uniformly converging subsequence; if we denote its limit
by $\C^*$, then it follows that $\C^* \in \mathcal{B}_B(\M,\N)$, and
$\dpc (\M,\N ;\C^*)=  \inf_{\C \in \mathcal{B}_B(\M,\N)} \dpc (\M,\N ;\C)$.
\end{proof}

We note here that all the further
proofs and results in the paper will remain valid ({\it mutatis mutandum})
if we replace everywhere the class of general area-preserving maps by
the more restricted class of bi-Lipschitz area-preserving maps.

Even when the existence of a minimizer is not guaranteed,
it is possible to prove that $\dcp(\M,\N)$ defines a
metric up-to congruence relation:
\begin{thm}
$\dcp(\M,\N)$ defines a metric between surfaces up-to-congruence, that is,
$\dcp(\M,\N) \geq 0$, $\dcp(\M,\N)=\dcp(\N,\M) $, $\dcp(\M,\N) \leq \dcp(\M,\L) + \dcp(\L,\N)$, and
$\dcp(\M,\N) = 0$ only if $\M$ and $\N$ are congruent.
\end{thm}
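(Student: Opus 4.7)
The plan is to derive the four claimed properties from the single-correspondence statements of Proposition \ref{thm:continuous_proscrustes_is_a_metric} by taking infima over $\A(\M,\N)$, with the only genuinely delicate point being the identification property $\dcp(\M,\N)=0 \Rightarrow \M,\N$ congruent, since we cannot assume the infimum in (\ref{e:procrustes_and_correspondence}) is attained by any particular $\C \in \A(\M,\N)$.

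Non-negativity is immediate from part (1) of Proposition \ref{thm:continuous_proscrustes_is_a_metric}. Symmetry follows because $\C \mapsto \C^{-1}$ is a bijection between $\A(\M,\N)$ and $\A(\N,\M)$ under which $\dpc$ is invariant by part (2), so the two infima match. For the triangle inequality, any $\C \in \A(\M,\N)$ and $\C' \in \A(\N,\L)$ compose to $\C' \circ \C \in \A(\M,\L)$, and part (4) gives
\[
\dcp(\M,\L) \;\leq\; \dpc(\M,\L; \C' \circ \C) \;\leq\; \dpc(\M,\N; \C) + \dpc(\N,\L; \C');
\]
taking the infimum independently over $\C$ and $\C'$ yields $\dcp(\M,\L) \leq \dcp(\M,\N) + \dcp(\N,\L)$.

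For the identification property, I would choose a minimizing sequence $\C_n \in \A(\M,\N)$ with $\dpc(\M,\N;\C_n) \to 0$ and, via Proposition \ref{prop:minimum_R_attained}, pick rigid motions $R_n = (U_n, t_n) \in \R$ attaining $\int_\M \norm{R_n x - \C_n x}^2 d\vol_\M(x) = \dpc(\M,\N;\C_n)^2$. As in the proof of Proposition \ref{prop:minimum_R_attained}, the $U_n$ lie in the compact group $O(3)$, and the $t_n$ remain bounded because $\C_n(\M) \subseteq \N$ is a bounded subset of $\Real^3$ while the integrals shrink to zero. After extracting a subsequence, $R_n \to R^* \in \R$ uniformly on the compact set $\M$, so $\int_\M \norm{R^* x - \C_n x}^2 d\vol_\M(x) \to 0$, whence $\C_n \to R^*\big|_{\M}$ in $L^2(\M;\Real^3)$ and, after a further subsequence, $\vol_\M$-a.e.\ pointwise.

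It then remains to show $R^*(\M) = \N$. Since $\C_n x \in \N$ and $\N$ is closed, the a.e.\ convergence gives $R^* x \in \N$ on a set $\M' \subseteq \M$ of full $\vol_\M$-measure; $\M'$ is dense in $\M$ (every nonempty open subset of a smooth 2-surface has positive area), so continuity of $R^*$ yields $R^*(\M) \subseteq \N$. Conversely, $R^*$ preserves area, so $R^*(\M)$ is a closed subset of $\N$ of area $1 = \vol_\N(\N)$; if it were a proper subset, then $\N \setminus R^*(\M)$ would be a nonempty relatively open subset of the smooth 2-manifold $\N$ and hence of positive area, a contradiction. Therefore $R^*(\M) = \N$, i.e.\ $\M$ and $\N$ are congruent. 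The main obstacle is exactly this last argument: producing a useful limit rigid motion $R^*$ in the absence of a limit correspondence $\C^*$, and then upgrading $L^2$-plus-a.e.\ convergence together with area preservation into the honest set equality $R^*(\M) = \N$.
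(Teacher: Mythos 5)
Your proof is correct and follows essentially the same route as the paper: infima over $\A$ for non-negativity, symmetry and the triangle inequality, and for the identification property a minimizing sequence $(\C_n, R_n)$ with compactness of the rigid motions, an a.e.-convergent subsequence, density plus continuity to get $R^*(\M)\subseteq\N$, and area-preservation of $R^*$ to force $R^*(\M)=\N$. The only cosmetic difference is that you invoke the standard fact that $L^2$ convergence yields a.e. convergence along a subsequence, where the paper carries out that step by hand via a Chebyshev/Borel--Cantelli argument with the $1/n^3$ rate.
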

\begin{proof}
Clearly $\dcp(\M,\N) \geq 0$.

If $\dcp(\M,\N)=0$ then we have a sequence $(\C_n)_{n\in \mathbb{N}}
\subset \A(\M,\N)$ and (by Proposition
\ref{prop:minimum_R_attained}) a sequence $\Big(R_n\Big)_{n\in
\mathbb{N}} \subset \R$ of rigid motions such that:
$$\int_\M \norm{R_n x - \C_n x}^2 d\vol_\M(x) < \frac{1}{n^3}.$$
By extracting a subsequence $\Big(R^*_k\Big)_{k\in \mathbb{N}}$ (with $R^*_k:=R_{n_k}$), 
we can assume that $\norm{R^*_k - R^{**}}_{\infty}\too 0$, with $R^{**} \in \R$ 
as $k \too \infty$, and that (with $\C^*_k:=\C_{n_k}$) 
$$ 
\int_\M \norm{R^{**}x - \C^*_k x}^2 d\vol_\M(x) < \frac{1}{k^3}.
$$
Set now $B_{n,\ell}:=\{x \, ;\, \norm{R^{**}x - \C^*_n x}^2\geq
1/\ell\}$, $ A_{\ell}:=\{x \,;\, \, \limsup_{k\too
\infty}{\norm{R^{**}x - \C^*_k x}^2}\geq 1/\ell\} = \cap_{m \in \NN}
\cup_{n \geq m}B_{n,\ell}\,, $ and $ A_{\infty}:=\cup_{\ell \geq
1}A_{\ell} $ $= \{x\, ;\, \, \limsup_{k\too \infty}{\norm{R^{**}x -
\C^*_k x}}\neq0\}$. Since $ A_{\ell}\subset A_{\ell+1} $ for all
$\ell$, it follows that $\vol_{\M}(A_{\infty})$ $= \lim_{\ell \too
\infty}\vol_{\M} (A_{\ell})\,$. We have $\vol_{\M}(B_{n,\ell})$$\leq
\ell \, \int_\M \norm{R^{**}x - \C^*_n x}^2 d\vol_\M(x)$ $ \leq
{\ell}/{n^3}\,,$ hence $\vol_{\M}\Big(\cup_{n \geq
m}B_{n,\ell}\Big)\leq {\ell}/{m^2}\,$ and thus $\vol_{\M}(A_{\ell})$
$= \vol_{\M}\Big(\cap_{m \in \NN} \cup_{n \geq m}B_{n,\ell}\Big)
\leq \inf_{m\in \NN} {\ell}/{m^2} =0\,$. It follows that
$\vol_{\M}(A_{\infty})=0$, or $ \vol_{\M}\Big(\{x\, ;\, \,
\limsup_{k\too \infty}{\norm{R^{**}x - \C^*_k x}}\neq
0\}\Big)\,=\,0. $

Therefore $R^{**}x=\lim_{k \too \infty} \C_k(x)$ for $x \in \M
\setminus A_{\infty}$, implying $ R^{**}(\M \setminus
A_{\infty})\subset \overline{\N}=\N $. Since every open disk in $\M$
(with respect to the geodesic distance on $\M$) has area strictly
greater than 0 in $\M$, $\M \setminus A_{\infty}$ is dense in $\M$.
By the continuity of $R^{**}$ it follows that $R^{**}(\M) \subset
\N$.

Let's assume now (hoping to derive a contradiction) that
there exists a point $\wt{y}\in \N$ such that $\wt{y}
\notin R^{**}(\M)$. Since $R^{**}(\M)$ is a closed set there must then exist
a set $O \subset \N$ with positive area such that $O \cap R^{**}(\M)
= \emptyset$. This is a contradiction since $R^{**}:\M\too\N$ is an
isometry and in particular area-preserving. Hence $R^{**} (\M) =
\N$, showing that $\M$ and $\N$ are congruent.

Symmetry is easy to establish as follows:
\begin{align*}
\dcp(\M,\N) & = \inf_{\C\in \A(\M,\N)}\dpc(\M,\N; \C) = \inf_{\C\in
\A(\M,\N)}\dpc(\N,\M; \C^{-1}) \\ & = \inf_{\C\in \A(\N,\M)}\dpc(\N,\M;
\C) = \dcp(\N,\M),
\end{align*}
where we used that $\C\in\A(\M,\N)$ iff $C^{-1}\in\A(\N,\M)$.

Lastly, for the triangle inequality, we have, by Theorem
\ref{thm:continuous_proscrustes_is_a_metric}, for every $\C\in
\A(\M,\N)$ and every $\C' \in \A(\N,\L)$,
$$
\dcp(\M,\L)  = \inf_{\C''\in \A(\M,\L)}\dpc(\M,\L; \C'') \leq \dpc(\M,\L; \C'\circ \C) \leq \dpc(\M,\N; \C) + \dpc(\N,\L; \C').
$$
Taking the infimum over $\C \in \A(\M,\N)$ and $\C' \in \A(\N,\L)$ we
get
\begin{align*}
\dcp(\M,\L)
\leq \inf_{\C\in\A(\M,\N)} \dpc(\M,\N ; \C) + \inf_{\C'\in \A(\N,\L)}
\dpc(\N,\L; \C')  = \dcp(\M,\N) + \dcp(\N,\L).
\end{align*}
\end{proof}

We conclude this section by providing an approximation result: given
two surfaces $\M, \, \N$ and a correspondence map $\C\in\A(\M,\N)$,
we would like to approximate the centroids $\int_\M x\ d\vol_\M(x),
\int_{\N} y \ d\vol_{\N}(y)$ and the integral defined in Proposition
\ref{prop:rotational_part}; these approximations will be used to
compute approximations to the optimal rigid transformations and to the
distances $\dpc(\M,\N;\C)$. To this end we will
use a simple rectangle-type integration formula that we describe
now. Let $Q=\set{q_\ell}_{\ell=1}^L \subset \M$ be a set of points
such that their corresponding Voronoi cells
$\set{\Upsilon_\ell}_\ell$ have approximately equal surface area; in
practice, such a set of points can be determined by means of the
Furthest Point Algorithm (FPS) \cite{ eldar97farthest}. Using the
notation $\vol_\M(\Upsilon_\ell) = \int_{\Upsilon_\ell} d\vol_\M $
we then have
\begin{equation}\label{e:rectangle_rule_on_surfaces}
    \int_\M f(x) d\vol_\M(x) \approx \sum_{\ell} f(q_{\ell})\vol_\M(\Upsilon_\ell).
\end{equation}
The error made in this approximation can be estimated in terms
of the \emph{fill distance} $\eta_\M(Q)$ of the set $Q$, defined as
\begin{equation}\label{e:fill_distance}
    \eta_\M(Q):=\sup \Big\{ r\in\Real \ \Big | \
\exists\, x\in\\M \ s.t. \ B_\M( x,r)\cap Q = \emptyset    \Big \},
\end{equation}
where $B_\M(x,r) = \{q\in\M \ | \ \gd_{\M}(x,q)<r\}$, with
$\gd_{\M}(x,q)$ the geodesic distance on $\M$ between $x$ and
$q$. Intuitively, the fill distance $\eta_\M(Q)$ is the radius of
the largest geodesic open ball that can be placed on the surface
$\M$ without including any point of the (discrete) set $Q$. In other
words it is the largest ``circular hole'' in the sampling $Q\subset
\M$. We have then
\begin{prop}
The error of the approximation {\rm (\ref{e:rectangle_rule_on_surfaces})} has the following upper bound:
\begin{align*}
\abs{\int_\M f(x) d\vol_\M(x) - \sum_{\ell}
f(q_\ell)\vol_\M(\Upsilon_\ell)}
\leq
 \mathop{\sup}_{\ell,x\in \Upsilon_\ell}\abs{f(x)-f(q_\ell)}
\leq M \eta_\M(Q),
\end{align*}
where $M$ is a bound
on the norm of the gradient $\nabla_\M f$ of $f$.
Hence the error is linear in the separation distance.
\end{prop}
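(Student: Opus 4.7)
The plan is to prove the two inequalities separately, with the partition by Voronoi cells providing the bridge between the quadrature error and the local oscillation of $f$, and the definition of the fill distance together with a mean-value-type estimate on $\M$ providing the bridge from local oscillation to $M\eta_{\M}(Q)$.

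First I would write $\int_{\M} f\, d\vol_{\M} = \sum_{\ell}\int_{\Upsilon_{\ell}} f(x)\, d\vol_{\M}(x)$, which holds because the Voronoi cells $\{\Upsilon_{\ell}\}$ tile $\M$ up to a set of measure zero (their common boundaries). Since $\vol_{\M}(\Upsilon_{\ell}) = \int_{\Upsilon_{\ell}} d\vol_{\M}$, the quadrature error on cell $\ell$ is $\int_{\Upsilon_{\ell}} \bigl(f(x)-f(q_{\ell})\bigr)\, d\vol_{\M}(x)$. Pulling absolute values inside the sum and inside each integral,
\[
\Bigl|\int_{\M} f\, d\vol_{\M} - \sum_{\ell} f(q_{\ell})\vol_{\M}(\Upsilon_{\ell})\Bigr| \leq \sum_{\ell}\int_{\Upsilon_{\ell}} |f(x)-f(q_{\ell})|\, d\vol_{\M}(x) \leq \sup_{\ell,\, x\in\Upsilon_{\ell}} |f(x)-f(q_{\ell})|,
\]
where at the last step I use that $\sum_{\ell}\vol_{\M}(\Upsilon_{\ell}) = \vol_{\M}(\M) = 1$. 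This gives the first inequality.

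For the second inequality, the key observation is that for any $x \in \Upsilon_{\ell}$, the point $q_{\ell}$ is the closest element of $Q$ to $x$ in the geodesic distance, so $\gd_{\M}(x,q_{\ell}) = \gd_{\M}(x,Q)$. By the definition of the fill distance, $\gd_{\M}(x,Q) \leq \eta_{\M}(Q)$ for every $x\in\M$; otherwise a geodesic ball $B_{\M}(x,r)$ with $\gd_{\M}(x,Q)> r>\eta_{\M}(Q)$ would contradict the supremum in \eqref{e:fill_distance}. Next, I would connect $x$ and $q_{\ell}$ by a minimizing geodesic $\gamma:[0,L]\to\M$ of length $L = \gd_{\M}(x,q_{\ell})$ parametrized by arclength, and apply the one-dimensional fundamental theorem of calculus to $t \mapsto f(\gamma(t))$ to obtain
\[
|f(x)-f(q_{\ell})| = \Bigl|\int_{0}^{L} \langle \nabla_{\M} f(\gamma(t)), \dot\gamma(t)\rangle\, dt\Bigr| \leq M\, L \leq M\,\eta_{\M}(Q),
\]
since $\|\dot\gamma(t)\|=1$ and $\|\nabla_{\M} f\|\leq M$. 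Taking the supremum over $\ell$ and $x\in\Upsilon_{\ell}$ gives the second inequality.

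The main technical point to be careful about is the existence of a minimizing geodesic of length $\gd_{\M}(x,q_{\ell})$ inside $\M$, but this is standard for compact $C^{1}$ surfaces (Hopf--Rinow applied to the induced Riemannian metric). Everything else is routine; I do not foresee a substantive obstacle.
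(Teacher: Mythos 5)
Your proof is correct and follows essentially the same route as the paper: decompose the error over the Voronoi cells, use unit area to bound it by the local oscillation, then bound $\abs{f(x)-f(q_\ell)}$ via the fundamental theorem of calculus along a minimizing geodesic together with Cauchy--Schwarz. The only difference is cosmetic: where the paper cites an external lemma for $\gd_\M(x,q_\ell)\leq \eta_\M(Q)$, you derive it directly from the Voronoi property and the definition of the fill distance, which is a perfectly valid (and self-contained) way to close that step.
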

\begin{proof}
Writing $\M=\cup_\ell \Upsilon_\ell$ we get
\begin{align*} \abs{\int_\M f(x) d\vol_\M(x) - \sum_{\ell} f(q_\ell)\vol_\M(\Upsilon_\ell)}
 & \leq
 \sum_\ell \int_{\Upsilon_\ell}\abs{f(x)-f(q_\ell)}d\vol_\M(x) \\
 & \leq
 \mathop{\sup}_{\ell,x\in \Upsilon_\ell}\abs{f(x)-f(q_\ell)} \sum_\ell \vol_\M(\Upsilon_\ell) \\
 & = \mathop{\sup}_{\ell,x\in \Upsilon_\ell}\abs{f(x)-f(q_\ell)},
 \end{align*}
 where the last equality uses $\vol_\M(\M)=1$. Now take arbitrary $x\in\Upsilon_\ell$,
 and denote by $\gamma(t):[0 , d_\M(q_\ell,x)]\too \M$ the arc-length speed geodesic curve
 connecting $q_\ell$ and $x$. Then,
 $$f(x)-f(q_\ell) = \int_0^{d_\M(q_\ell,x)}\frac{d}{dt}\brac{f(\gamma(t))}dt =
 \int_0^{d_\M(q_\ell,x)}\ip{ \nabla_\M f(\gamma(t)), \dot{\gamma}(t)}_\M dt.$$
Using the Cauchy-Schwarz inequality,
\begin{align*}
\abs{f(x)-f(q_\ell)} & \leq \mathop{\sup}_{x\in \M} \|\nabla_\M f(x)\|_\M \int_0^{d_\M(q_\ell,x)} \norm{\dot{\gamma}(t)}_\M dt \\
& = \mathop{\sup}_{x\in \M} \|\nabla_\M f(x)\|_\M \ d_\M(q_\ell,x).
\end{align*}
 Lastly, the inequality $d_\M(q_\ell,x) \leq \eta(\M)$ can be derived directly from the properties
 of Voronoi cells (see Lemma D.2 in \cite{Lipman_Puente_Daubechies:2010:computational}).
\end{proof}


\section{M\"{o}bius transformations as a reduced search space}

Computing the surface Procrustes distance, as we defined it above,
amounts to solving a hard optimization problem: unfortunately, the
sets $\A(\M,\N)$ or $\mathcal{B}_B(\M,\N)$
are formally
infinite dimensional manifolds, and
therefore extremely hard to search in practice. Our key idea is to
replace the search space $\A(\M,\N)$ in the variational formulation
(\ref{e:procrustes_and_correspondence}) by another, much smaller,
set of maps. The core observation is that the set of conformal (or
anti-conformal) mappings
between $\M$ and $\N$, which has a
finite (and small) dimensionality, gets ``close'' (in some
sense to be made precise below) to the
minimizing $\C\in \A(\M,\N)$. In particular, we shall see that if $\dcp (\M,\N)$ is small,
then the minimizing area-preserving map $\C$ in (\ref{e:procrustes_and_correspondence})
is close to conformal.

Let us explain this in some more detail.
We are particularly interested in computing (approximate) continuous Procrustes
distances for ``close'' pairs \cite{pnas}. In those cases the insight
that (close to) optimal $\C$ have to be close to conformal
leads us to a strategy that involves
minimizing over a much smaller set of maps.
To achieve this, we shall make use of a nonlinear
procedure $\pr$ that ``transforms'' a map that is close
to $\A(\M,\N)$ into an area-preserving map, i.e. to an element of $\A(\M,\N)$.
This nonlinear transformation leaves elements of $\A(\M,\N)$ unchanged, and can thus be
interpreted as a nonlinear projection procedure (hence the notation).
The smaller set of maps over which we shall minimize is then the image in $\A(\M,\N)$ of
the family of conformal maps (from $\M$ to $\N$),
transformed by  $\pr$.

As a search space, the family of conformal mappings is a much more
``friendly'' setting than $\A(\M,\N)$ or $\mathcal{B}_B(\M,\N)$.
First, by the uniformization theorem the
conformal (or anti-conformal) bijective mappings
can be characterized completely, and an explicit parameterization can be given in terms of a small
number of parameters. For instance, the family of conformal bijective mappings
between two disk-type surfaces $\M, \, \N$ is represented by
(disk-preserving) M\"{o}bius transformations. Each mapping in this
family is completely characterized by 3 real (bounded) parameters;
therefore the search over the space of conformal mappings can be
done efficiently. Second, M\"{o}bius transformations are smooth
bijective diffeomorphisms, so that our candidate search space
consists of only ``nice'' intrinsic mappings.

To motivate why we would consider restricting ourselves to conformal
mappings (or their deformations through $\pr$) for the optimization,
we note that for $\M, \, \N$ such that $\dcp(\M,\N)=0$, the infimum in
(\ref{e:procrustes_and_correspondence}) is achieved for some
$R\in\R$ (by Theorem \ref{thm:continuous_proscrustes_is_a_metric});
in this case the minimizing $\C=R$ is obviously conformal. However,
we prove below the stronger result that
a correspondence
$\C:\M\too\N$ for which the distance $\dpc(\M,\N;\C)$ is small can be
approximated (under rather mild assumptions on the regularity of
$\C$) by a bijective globally conformal mapping from $\M$ to $\N$.

We start with a few simple
lemmas. The first Lemma is proved in \cite{Smolyanov00}:
\begin{lem}\label{lem:d_g_x_y_minus_norm_x_y}
Let $\M\subset \Real^3$ be a compact {\rm 2}-manifold with the
induced Riemannian metric $g$. Then
$$\Big | \gd_\M(x,x') - \norm{x-x'} \Big | \leq C_{\M}  \gd_\M(x,x')^{3},$$
where $\gd_\M(x,x')$ denotes the geodesic distance between $x$ and
$x'$, $\norm{x-x'}$ denotes the Euclidean distance between these
points, and $C_{\M}$ depends only on the curvature of $\M$.
\end{lem}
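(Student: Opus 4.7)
The plan is to parameterize the minimizing geodesic by arc length and expand the squared chord length as a double integral of $\langle \dot\gamma, \dot\gamma\rangle$, exploiting the fact that in a submanifold of $\Real^3$ the geodesic equation reduces to the statement that the ambient acceleration $\ddot\gamma$ is normal to $\M$ and hence bounded in norm by the second fundamental form.

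Concretely, let $L := \gd_\M(x,x')$ and let $\gamma:[0,L]\to \M$ be a unit-speed minimizing geodesic with $\gamma(0)=x$, $\gamma(L)=x'$, where I view $\gamma$ as a curve in $\Real^3$. Then
\begin{equation*}
\|x-x'\|^2 = \Big\|\int_0^L \dot\gamma(t)\,dt\Big\|^2 = \int_0^L\!\!\int_0^L \langle \dot\gamma(s),\dot\gamma(t)\rangle\,ds\,dt.
\end{equation*}
Because $\gamma$ is a geodesic of $\M$ and $\M$ is isometrically embedded in $\Real^3$, the ambient acceleration $\ddot\gamma(t)$ coincides with the second fundamental form $\mathrm{II}_{\gamma(t)}(\dot\gamma,\dot\gamma)$, which is normal to $T_{\gamma(t)}\M$; compactness of $\M$ gives a uniform bound $\|\ddot\gamma(t)\| \le \kappa_\M$ depending only on $\M$.

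Next I would show that $\langle\dot\gamma(s),\dot\gamma(t)\rangle$ is close to $1$. Writing $\dot\gamma(t)=\dot\gamma(s)+\int_s^t\ddot\gamma(u)\,du$, using $\|\dot\gamma(s)\|=1$ and the identity $\langle\dot\gamma(u),\ddot\gamma(u)\rangle=0$, and re-expressing $\dot\gamma(s)=\dot\gamma(u)+\int_u^s\ddot\gamma(v)\,dv$, one gets $|\langle\dot\gamma(s),\ddot\gamma(u)\rangle|\le \kappa_\M^2|s-u|$, and therefore
\begin{equation*}
\big|\langle\dot\gamma(s),\dot\gamma(t)\rangle-1\big|\;\le\;\tfrac{1}{2}\kappa_\M^2(t-s)^2.
\end{equation*}
Integrating this estimate over $[0,L]^2$ yields $\big|\|x-x'\|^2-L^2\big|\le \tfrac{1}{12}\kappa_\M^2 L^4$, and then factoring $L^2-\|x-x'\|^2=(L-\|x-x'\|)(L+\|x-x'\|)$ together with the trivial bound $\|x-x'\|\le L$ (so the sum in the denominator is at least $L$) produces $0\le L-\|x-x'\|\le \tfrac{1}{12}\kappa_\M^2\, L^3$, which is the claim with $C_\M=\kappa_\M^2/12$.

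The only genuine subtlety is the claim that $\ddot\gamma$ is normal to $\M$ with a curvature-type bound: this is a standard fact about geodesics in Riemannian submanifolds of Euclidean space, but it is the one place where the \emph{extrinsic} geometry of the embedding enters. Everything else is elementary calculus plus the Cauchy--Schwarz-type manipulations above, so I expect the main writing effort to be in justifying the second fundamental form bound cleanly and checking the constant through the double integration; nothing deeper than that should be needed.
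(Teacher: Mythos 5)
Your proof is correct, and it is in fact more self-contained than what the paper offers: the paper does not prove this lemma at all, but simply cites it from the reference [Smolyanov00]. Your route --- writing $\norm{x-x'}^2=\int_0^L\!\!\int_0^L\langle\dot\gamma(s),\dot\gamma(t)\rangle\,ds\,dt$ for a unit-speed minimizing geodesic, using that $\ddot\gamma$ is normal to $\M$ (so $\norm{\ddot\gamma}\le\kappa_\M$ with $\kappa_\M$ a bound on the second fundamental form, uniform by compactness) together with $\langle\dot\gamma(u),\ddot\gamma(u)\rangle=0$ to get $\abs{\langle\dot\gamma(s),\dot\gamma(t)\rangle-1}\le\tfrac12\kappa_\M^2(t-s)^2$, and then dividing $L^2-\norm{x-x'}^2$ by $L+\norm{x-x'}\ge L$ --- is sound, and the constants ($L^4/6$ from the double integral, hence $C_\M=\kappa_\M^2/12$) check out; the trivial inequality $\norm{x-x'}\le L$ you implicitly use is of course immediate since the chord is no longer than the connecting curve. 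Two small points worth making explicit in a write-up: (i) the existence of a minimizing geodesic uses compactness (Hopf--Rinow) and, strictly speaking, assumes $\M$ has no boundary --- for the disk-type surfaces the paper later works with, a shortest path may run along $\partial\M$, where $\ddot\gamma$ is no longer normal to the surface; the same estimates still go through because the shortest path is $C^{1,1}$ with $\norm{\ddot\gamma}$ bounded (by the second fundamental form on interior arcs and by the space curvature of $\partial\M$ on boundary arcs) and $\langle\dot\gamma,\ddot\gamma\rangle=0$ wherever $\ddot\gamma$ exists; (ii) your constant depends on the \emph{extrinsic} curvature (the second fundamental form), not merely the Gaussian curvature --- this is the right reading of the lemma's informal phrase ``depends only on the curvature of $\M$'' (a flat cylinder shows Gaussian curvature alone cannot suffice), but it deserves a sentence, as you yourself anticipated.
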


Next, we prove a result concerning the approximation of the norm of the differential of a map:
\begin{lem}\label{lem:div_diff_approx_norm_differential}
Let $\M, \, \N\subset \Real^3$ be compact {\rm 2}-manifolds with the
induced Riemannian metrics $g,h$ (respectively). Let $F:\M\too\N$ be
a smooth map, and denote by $DF_x$ the differential of $F$ at
arbitrary $x\in\M$. Then for $\delta >0$ sufficiently small, the
following holds: for all $x\in\M$, there exists $x'$ in the boundary
$\partial B_g(x,\delta)$ of the $\delta$-radius geodesic ball
centered at $x$, $\partial B_g(x,\delta):=\{u\in \M|
\,\gd_\M(x,u)=\delta \}$, such that
$$
\Bigg | \frac{\norm{F(x)-F(x')}}{\norm{x-x'}} - \norm{DF_x}_{g,h}
\Bigg | \leq \widetilde{C}_\kappa\,\delta,
$$
where $\norm{\cdot}_{g,h}$ is the operator norm associated with the
norms $\norm{\cdot}_g$ and $\norm{\cdot}_h$ in the usual way, i.e.
$\norm{L}_{g,h}= \sup_{\xi\ne 0}\frac{\norm{L
\xi}_h}{\norm{\xi}_g}$. Here, $\widetilde{C}_\kappa$ depends only on
the maximum of the surfaces' curvature and norms of second order
differentials of the mapping $F$.
\end{lem}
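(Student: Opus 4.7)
The plan is to construct the required $x'$ by moving from $x$ along the geodesic of $\M$ in the direction in which $DF_x$ attains its operator norm, and then to compare the ratio $\|F(x')-F(x)\|/\|x-x'\|$ to $\|DF_x\|_{g,h}$ via two Taylor-type expansions: one on the numerator, using smoothness of $F$ and of the embedding $\M\hookrightarrow\Real^3$, and one on the denominator, using the intrinsic-vs-extrinsic distance estimate of Lemma~\ref{lem:d_g_x_y_minus_norm_x_y}.

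First, since the unit $g$-sphere in $T_x\M$ is compact (it is a circle, $\M$ being $2$-dimensional) and $\xi\mapsto\|DF_x\xi\|_h$ is continuous on it, I pick a unit vector $\xi^*\in T_x\M$ with $\|DF_x\xi^*\|_h=\|DF_x\|_{g,h}$. For $\delta$ smaller than the injectivity radius of $\M$, set $x':=\exp_x^\M(\delta\xi^*)$. Then $\gd_\M(x,x')=\delta$, so $x'\in\partial B_g(x,\delta)$ as required, and Lemma~\ref{lem:d_g_x_y_minus_norm_x_y} immediately gives
$$\big|\,\|x-x'\|-\delta\,\big|\leq C_\M\,\delta^3,$$
so the denominator equals $\delta(1+O(\delta^2))$.

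For the numerator, let $\gamma:[0,\delta]\to\M$ be the unit-speed geodesic from $x$ in direction $\xi^*$, and view $F$ extrinsically as a map into $\Real^3$. Writing
$$F(x')-F(x)=\int_0^\delta DF_{\gamma(t)}\,\dot\gamma(t)\,dt,$$
I Taylor-expand each factor in $t$: since $\gamma$ is a geodesic of a submanifold of $\Real^3$, one has $\dot\gamma(t)=\xi^*+t\,a(t)$ in $\Real^3$ with $\|a(t)\|$ controlled by the second fundamental form of $\M$; and smoothness of $F$ gives $\|DF_{\gamma(t)}-DF_x\|_{\mathrm{op}}\leq t\,\sup_y\|D^2F_y\|$. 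Combining these two expansions, and using that $DF_x\xi^*$ lies in $T_{F(x)}\N$ so that its Euclidean norm coincides with its $h$-norm, yields
$$\big|\,\|F(x')-F(x)\|-\|DF_x\xi^*\|_h\,\delta\,\big|\leq K_1\,\delta^2,$$
where $K_1$ depends only on $\sup\|D^2F\|$ and on the curvatures of $\M$ and $\N$.

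Dividing the numerator and denominator estimates and using $\|DF_x\xi^*\|_h=\|DF_x\|_{g,h}$ gives the desired inequality with an appropriate $\widetilde{C}_\kappa$, provided $\delta$ is small enough (so that the $O(\delta^2)$ denominator correction does not blow up under inversion). The main (mild) obstacle is not any single step but the consistent bookkeeping of the two sources of second-order error---the second fundamental form of $\M$, entering through $\dot\gamma$ viewed in $\Real^3$, and the Hessian of $F$, entering through the variation of $DF_{\gamma(t)}$ along $\gamma$---and verifying that they combine into a single constant $\widetilde{C}_\kappa$ depending only on the quantities advertised in the statement.
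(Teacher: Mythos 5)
Your proposal is correct and follows essentially the same route as the paper: pick the direction in $T_x\M$ where $\norm{DF_x}_{g,h}$ is attained, set $x'=\exp_x(\delta\xi^*)$, Taylor-expand $F(x')-F(x)$ along that direction to first order with an $O(\delta^2)$ error controlled by curvature and second-order differentials of $F$, and then use Lemma \ref{lem:d_g_x_y_minus_norm_x_y} to trade the geodesic distance for the Euclidean one in the denominator. The only (cosmetic) difference is that the paper performs the expansion in the exponential chart via $\wt{F}=F\circ\exp_x$, which handles the tangent-space identification implicitly, whereas you do it extrinsically along the geodesic, splitting the second-order error between the second fundamental form of $\M$ and the Hessian of $F$ --- the bookkeeping you flag is standard and the constants depend on exactly the quantities stated.
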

\begin{proof}
For $\xi \in \Real^2$, we set $\D(\delta)=\set{\xi \in \Real^2 \
\mid \ \norm{\xi}\leq\delta}$. Fix $x$ and take $\delta>0$ small
enough so that the exponential map $exp_x:\D(2\delta)\too
B_g(x,2\delta)$ is a diffeomorphism. For every $x'\in
B_g(x,\delta)$, we denote by the vector $\xi_{x'}$ the vector in $
\D(\delta)$ such that $exp_x(\xi_{x'})=x'$; in other words,
$\xi_{x'}$ is tangent to the geodesic on $\M$ that goes from $x$ to
$x'$, and  $\norm{\xi_{x'}} = \gd_\M(x,x')$. Denote $\wt{F}=F\circ
exp_x : \D(\delta) \too \N$, and consider the line  $\gamma(t) =
t\,\xi_{x'}$, $0\leq t \leq 1$. Then
\begin{align}
\label{e:Cx_minus_Cx_tag} \nonumber F(x')-F(x) & = \int_0^1
\frac{d}{dt} \brac{\wt{F}(\gamma(t))  } dt = \int_0^1
D\wt{F}_{\gamma(t)} \dot{\gamma}(t) dt \\  & =\int_0^1 \Big (
D\wt{F}_{\gamma(0)} + O(\norm{\xi_{x'}}) \Big ) \xi_{x'} dt  =
D\wt{F}_x \xi_{x'} + O(\gd_\M(x,x')^2)\,.
\end{align}

Let $\xi\in\partial \D(\delta)$ be such that
$\norm{D\wt{F}_x(\xi)}_h = \norm{D\wt{F}_x}_{g,h} \norm{\xi}_g$.
Remember that at $exp^{-1}(x)$ the pull-back metric tensor
$(exp_x^*g)$ equals $\delta_{ij}$, and therefore $\norm{\xi}_g =
\norm{\xi}$. Also note that $\norm{D\wt{F}_x(\xi)}_h =
\norm{D\wt{F}_x(\xi)}$, since the metric $h$ is induced by the
ambient Euclidean metric of $\Real^3$. Now set $x'=exp_x(\xi)$,
and take the Euclidean norm of both sides of
(\ref{e:Cx_minus_Cx_tag}). Then we have
$$\norm{F(x')-F(x)} =  \norm{D\wt{F}_x}_{g,h} \gd_\M(x,x') + O(\gd_\M(x,x')^2),$$
and therefore
$$
\frac{\norm{F(x')-F(x)}}{\gd_\M(x,x')} =  \norm{D\wt{F}_x}_{g,h} +
O(\gd_\M(x,x')).
$$
Using Lemma \ref{lem:d_g_x_y_minus_norm_x_y}, this leads to the
desired estimate.
\end{proof}

Next, we define the \emph{cone condition} for a surface $\M$:
\begin{defn}\label{def:cone_cond_M}
We say that a compact 2-manifold $\M \subset \Real^3$ satisfies the
$(\sigma,\theta)$-\emph{cone condition}, where $\sigma>0$ and $\theta\in (0,2\pi]$,
if for every
$x\in\M$ there is a unit vector $\mathbf{n}$ in the tangent
plane $T_x\M$ such that the exponential map $exp_x$ is well-defined
on the
cone
$c_\M(\sigma,\theta;\mathbf{n})=\set{\xi
\in \Real^2 ; \norm{\xi}\leq\sigma \, , \,
\ip{\xi,\mathbf{n}}\geq \norm{\xi}\,\cos(\theta/2)}$,
and is one-to-one on the whole cone.
\end{defn}
In other words, the surface $\M$ satisfies the $(\sigma,\theta)$-cone condition if for every $x\in\M$,
there is a ``fan'', spanning at least an angle $\theta$, of geodesics that
leave $x$ and continue, within $\M$, for at least a distance $\sigma$ (w.r.t. the
metric induced on $\M$ by $\Real^3$), without intersecting themselves or any other geodesic in the fan.
We have now

\begin{lem}\label{lem:ingrid_condition}
Let $\M\subset \Real^3$ be a compact {\rm 2}-manifold
satisfying the ($\sigma,\theta)$-cone
condition. Then there exist
constants $\rho\ ,\, \Gamma>0$ depending on $\sigma$, $\theta$
and on the curvature $\kappa$ of $\M$
such that for all $u\in\M$ and all $r < \rho$, the area of $\set{x
\in \M|\,\norm{u-x}\leq r }$ is bounded below by $\Gamma r^2$
(with $\norm{\cdot}$ standing for the Euclidean norm in $\Real^3$).
\end{lem}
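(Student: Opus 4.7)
The plan is to exhibit, for each $u\in\M$, a subset of $\M$ lying inside the Euclidean $r$-ball around $u$ whose area is of order $r^2$, obtained as the image under $\exp_u$ of a Euclidean sector in the tangent plane $T_u\M$. The cone condition provides the existence of such a sector, and the compactness of $\M$ (with its smooth embedding) gives a uniform curvature bound that lets us control the area distortion of $\exp_u$.

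First I would fix $u\in\M$ and, using the cone condition, pick a unit vector $\mathbf{n}\in T_u\M$ such that $\exp_u$ is well-defined and one-to-one on $c_\M(\sigma,\theta;\mathbf{n})$. For $r\le\sigma$ let $C_r:=c_\M(r,\theta;\mathbf{n})$, a Euclidean sector of radius $r$ and opening angle $\theta$, whose Euclidean area is $\theta r^2/2$. For any $\xi\in C_r$, the curve $t\mapsto\exp_u(t\xi)$, $t\in[0,1]$, is a geodesic of $\M$ starting at $u$ parametrized so that its $\M$-length equals $\norm{\xi}$, giving $\gd_\M(u,\exp_u(\xi))\le\norm{\xi}\le r$. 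Since this geodesic is simultaneously a path in $\Real^3$, its Euclidean chord is no longer than its length, so $\norm{u-\exp_u(\xi)}\le\gd_\M(u,\exp_u(\xi))\le r$. Hence $\exp_u(C_r)\subseteq\set{x\in\M:\norm{u-x}\le r}$.

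Next I would estimate the area of $\exp_u(C_r)$ using the pullback of the induced metric of $\M$ under $\exp_u$. In polar normal coordinates $(s,\phi)$ on $T_u\M$ this metric has the classical form $ds^2+G(s,\phi)^2\,d\phi^2$, where $G$ satisfies the Jacobi equation along the radial geodesic with initial data $G(0,\phi)=0$, $\partial_s G(0,\phi)=1$. The standard Rauch/Bishop comparison in two dimensions yields $G(s,\phi)\ge s\bigl(1-\kappa s^2/6\bigr)$ for all $s\le\sigma$, where $\kappa$ is a uniform upper bound on $\abs{K_\M}$ (which is finite by compactness and smoothness of $\M$). Choosing $\rho\in(0,\sigma]$ small enough that $\kappa\rho^2/6\le 1/2$, we obtain, for every $r\le\rho$,
\begin{equation*}
\mathrm{area}\bigl(\exp_u(C_r)\bigr)=\int_0^\theta\!\!\int_0^r G(s,\phi)\,ds\,d\phi\ \ge\ \frac{1}{2}\int_0^\theta\!\!\int_0^r s\,ds\,d\phi\ =\ \frac{\theta\,r^2}{4}.
\end{equation*}
Injectivity of $\exp_u$ on the cone ensures no area is double-counted, so setting $\Gamma:=\theta/4$ finishes the argument.

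The only technically interesting step is the Jacobi-field comparison estimate for $G$; the rest is bookkeeping from the cone condition together with the trivial inequality $\norm{u-x}\le\gd_\M(u,x)$ for surfaces embedded in Euclidean space. I expect the main obstacle to be ensuring that the Rauch comparison is uniform in $u\in\M$, but that follows immediately from the global curvature bound $\kappa$. The resulting constants $\rho=\min\!\bigl(\sigma,\sqrt{3/\kappa}\bigr)$ and $\Gamma=\theta/4$ depend only on $\sigma$, $\theta$, and $\kappa$, exactly as the lemma requires.
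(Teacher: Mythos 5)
Your proof is correct and follows essentially the same route as the paper: both restrict to the geodesic sector supplied by the cone condition and lower-bound its area by integrating the metric density in polar normal coordinates, where the density is bounded below by roughly $s$ for small $s$. The only minor (and slightly cleaner) differences are that you use the trivial chord inequality $\norm{u-x}\le \gd_\M(u,x)$ where the paper invokes Lemma \ref{lem:d_g_x_y_minus_norm_x_y} and therefore works with the geodesic ball of radius $r/2$, and that you justify the density bound via a uniform Rauch/Jacobi comparison rather than the paper's Taylor expansion $\sqrt{g}(\tau,\phi)=\tau-\tau^{3}\kappa(u)/6+o(\tau^{3})$.
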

\begin{proof}
By Lemma \ref{lem:d_g_x_y_minus_norm_x_y} there exists a constant
$R>0$ (depending only on the curvature of $\M$) such that for all
$x,y\in \M$ satisfying $\gd_\M(x,y) < R$ we have $$\gd_\M(x,y) >
\frac{1}{2} \norm{x-y}.$$

Set $\rho_0 = 2\,\min \set{R,\sigma}$, and fix an arbitrary
$u\in\M$. We have then, for all $r<\rho_0$, that $\set{x \in
\M|\,\gd_\M(u,x)\leq r/2 } \subset \set{x \in \M|\,\norm{u-x}\leq r
}$, and consequently
$$\int_{\set{x \in \M|\,\norm{u-x}\leq r }}d\vol_\M(x) \geq
\int_{\set{x \in \M|\,\gd_\M(u,x)\leq r/2 }}d\vol_\M(x).$$

Now introduce polar coordinates $(\tau, \phi)$ on the tangent plane $T_u\M$, so that
the vector $\mathbf{n}$ (with respect to which the cone condition holds at $u$)
is aligned with the direction $\phi =0$. With respect to this
coordinate system, the exponential $exp_u$ maps $[0,\sigma]\times[-\theta/2,\theta/2]$
to $\M$, and
the metric density can be written as
\cite{spivak1999comprehensive_vol2}:
$$
\sqrt{g}(\tau,\phi) = \tau -
\tau^3\frac{\kappa(u)}{6} + o(\tau^3).
$$
Since $r < \rho_0 \leq
2\sigma$, the sector
$[0,r/2]\times[-\theta/2,\theta/2]$ is contained in
$[0,\sigma]\times[-\theta/2,\theta/2]$ and we have
\begin{align*}
\int_{\set{x \in \M|\,\gd_\M(x,y)\leq r/2 }}d\vol_\M(x) & \geq
\int_{-\theta/2}^{\theta/2}\,\int_{0}^{r/2}\sqrt{g}(\tau,\phi)\,d\tau \,
d\phi \\ & = r^2\frac{\theta}{8} - r^4 \frac{\kappa(u) \theta}{384} +
o(r^4)\, = \, r^2\frac{\theta}{8}\,\Big(1+\,O(r^2)\Big)\,,
\end{align*}
where, as usual, the absolute value of the $O(r^2)$ term is bounded above by $Cr^2$ , 
for some $C>0$, for all
$r$ smaller than some $r_1$.
Setting $\rho=\min(\rho_0, r_1, 1/\sqrt{2C})$ and $\Gamma = [1- \min(1/2,C\,r_1^2)]\,\theta/8 $ 
we obtain, for $r< \rho$,
$$
\int_{\set{x \in \M|\,\gd_\M(x,y)\leq r/2 }}d\vol_\M(x)  \geq \Gamma
\,r^2\,,
$$
completing the proof.
\end{proof}

We are ready to prove the main result of this section, which provides a bound on the
conformal distortion $dis_{\C}$ of the optimal area-preserving
``alignments'' $\C$ for surfaces $\M$ and $\N$ that are close to
each other in the continuous Procrustes distance. The conformal
distortion $dis_{\C}(x)$ of $\C$ at $x$ is defined as the ratio
between the two singular values of the matrix obtained by expressing
the differential $D\C_x$ with respect to orthonormal bases in
$T_x\M$ and $T_{\C x}\N$, respectively.
\begin{thm}\label{thm:small_procrustes_dist_means_almost_conformal}
Let $\M, \, \N\subset \Real^3$ be  {\rm 2}-manifolds with induced
Riemannian metrics $g,\,h$ (respectively), with curvatures bounded
above by $\kappa$, and satisfying the $(\sigma,\theta)$-cone condition.
We consider area-preserving diffeomorphisms $\C:\M\too\N$ with first
and second order differentials bounded by $M$.
Then, for sufficiently small $\epsilon$, the bound $\dpc(\M,\N;\C)
\leq \epsilon$ implies the following bound on the conformal
distortion $dis_{\C}$ of the map $\C$:
$$
\sup_{x \in \M} dis_{\C}(x) \leq 1 + O(\epsilon^{1/4}),
$$
where the constant in the $O$-notation depends on only $\kappa$ and
$M$.
\end{thm}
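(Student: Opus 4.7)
The plan is to convert the $L^{2}$-closeness of $\C$ to the optimal rigid motion $R^{*}$ (encoded in $\dpc(\M,\N;\C)\le\epsilon$) into a uniform pointwise upper bound on the operator norm $\|D\C_{x}\|_{g,h}$, and then to use area-preservation to convert that bound into one on the conformal distortion. By Proposition \ref{prop:minimum_R_attained} the infimum defining $\dpc(\M,\N;\C)$ is attained at some $R^{*}\in\R$ with $\int_{\M}\|R^{*}x-\C x\|^{2}\,d\vol_{\M}(x)\le \epsilon^{2}$; Chebyshev's inequality then produces, for any $\alpha>0$, a ``bad'' set $A_{\alpha}:=\{x\in\M:\|R^{*}x-\C x\|>\alpha\}$ of area at most $\epsilon^{2}/\alpha^{2}$. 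On the complementary ``good'' set $G_{\alpha}:=\M\setminus A_{\alpha}$, the map $\C$ lies within $\alpha$ of the isometry $R^{*}$ pointwise.

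I would then use the $(\sigma,\theta)$-cone condition, through Lemma \ref{lem:ingrid_condition}, to show that $G_{\alpha}$ is quantitatively dense: since every Euclidean $r$-ball in $\M$ has area at least $\Gamma r^{2}$, as soon as $\Gamma r^{2}>\epsilon^{2}/\alpha^{2}$ such a ball cannot lie entirely in $A_{\alpha}$, so every point of $\M$ is within Euclidean distance $r$ of a point of $G_{\alpha}$. Lemma \ref{lem:d_g_x_y_minus_norm_x_y} promotes this to a geodesic-distance statement (for $r$ small enough), giving that every point of $\M$ lies at geodesic distance $O(r)$ from $G_{\alpha}$, and one can take $r\sim\epsilon/\alpha$.

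The heart of the argument is a pointwise upper bound on $\|D\C_{x}\|_{g,h}$. Fix $x\in\M$ and a small $\delta>0$, and use Lemma \ref{lem:div_diff_approx_norm_differential} to pick $x'\in\partial B_{g}(x,\delta)$ with
\[
\|D\C_{x}\|_{g,h}\;\le\;\frac{\|\C x-\C x'\|}{\|x-x'\|}+\widetilde{C}_{\kappa}\,\delta.
\]
Using the quantitative density of $G_{\alpha}$, choose $x_{0},x_{0}'\in G_{\alpha}$ within geodesic distance $O(r)$ of $x$ and $x'$, respectively. The hypothesis that $D\C$ is bounded by $M$ yields $\|\C y-\C y'\|\le M\,\gd_{\M}(y,y')$, which together with $R^{*}$ being a Euclidean isometry gives $\|\C x-R^{*}x\|,\;\|\C x'-R^{*}x'\|=O(\alpha+Mr)$. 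A triangle-inequality estimate through $R^{*}$ then yields $\|\C x-\C x'\|\le\|x-x'\|+O(\alpha+Mr)$, and since $\|x-x'\|=\delta+O(\delta^{3})$ by Lemma \ref{lem:d_g_x_y_minus_norm_x_y}, one obtains
\[
\|D\C_{x}\|_{g,h}\;\le\;1+O\!\left(\frac{\alpha+r}{\delta}+\delta\right).
\]
Balancing the parameters by setting $\alpha\sim\epsilon^{1/2}$ (so $r\sim\epsilon^{1/2}$) and $\delta\sim\epsilon^{1/4}$ produces $\|D\C_{x}\|_{g,h}\le 1+O(\epsilon^{1/4})$ uniformly in $x$.

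Finally, area-preservation forces the singular values $\sigma_{1}(x)\ge\sigma_{2}(x)$ of $D\C_{x}$ (expressed in orthonormal bases of $T_{x}\M$ and $T_{\C x}\N$) to satisfy $\sigma_{1}\sigma_{2}=1$; together with $\sigma_{1}=\|D\C_{x}\|_{g,h}\le 1+O(\epsilon^{1/4})$, this forces $\sigma_{2}\ge 1-O(\epsilon^{1/4})$, hence $dis_{\C}(x)=\sigma_{1}/\sigma_{2}\le 1+O(\epsilon^{1/4})$ as required. The main obstacle is that the maximizing direction $x'$ produced by Lemma \ref{lem:div_diff_approx_norm_differential} is dictated by $D\C_{x}$ itself and can easily land inside the bad set $A_{\alpha}$; the entire role of the cone condition, via the area lower bound of Lemma \ref{lem:ingrid_condition}, is to guarantee a nearby good point from which the Lipschitz bound on $\C$ (supplied by the first-derivative hypothesis) can transport the $L^{2}$ control at $x_{0}'$ back to a usable pointwise estimate at $x'$.
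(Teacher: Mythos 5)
Your proposal is correct and follows essentially the same route as the paper: first an $L^2$-to-$L^\infty$ upgrade giving $\sup_{x}\norm{R^*x-\C x}=O(\epsilon^{1/2})$ from the cone condition (Lemma \ref{lem:ingrid_condition}) together with the Lipschitz control supplied by the bounded differential, then Lemma \ref{lem:div_diff_approx_norm_differential} at scale $\delta\sim\epsilon^{1/4}$ with a triangle inequality through the isometry $R^*$, and finally the determinant-one (area-preservation) argument. The only difference is cosmetic: the paper obtains the sup bound by integrating $\norm{Rx-\C x}^2$ over a Euclidean ball centered at the maximizer of the Lipschitz function $x\mapsto\norm{Rx-\C x}$, whereas you use Chebyshev plus quantitative density of the good set and Lipschitz transport --- two equivalent implementations of the same bootstrap, with the same exponents.
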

\begin{proof} Denote by $R\in\R$ the rigid motion for
which the infimum in
(\ref{eq:dpc}) is attained for $\C$ .

The first step in our proof is to derive a uniform bound on $\norm{R(x)-\C(x)}$.
We start by noting that the function $q(x) = \norm{R(x)-\C(x)}$ is
Lipschitz with a constant $\lambda$ dependent only on
$M$.
Indeed, we have
\begin{align*}
\Big | \norm{R(x)-\C(x)} - \norm{R(y)-\C(y)} \Big | & \leq
\norm{R(x)-R(y)} + \norm{\C(x)-\C(y)} \\ &
= \norm{x-y} + \norm{\C(x)-\C(y)} \,.
\end{align*}
By assumption, $ \gd_\N(\C(x),\C(y)) \leq M \gd_\M(x,y)$. By Lemma
3.1, $\gd_\M(x,y)\leq 3/2\, \norm{x-y}$ if $\gd_\M(x,y) $ is
sufficiently small. On the other hand, we have, for all $x'$, $y' \in
\N$, $\norm{x'-y'} \leq \gd_\N(x',y')$,
$\gd_\N$ is the metric induced on $\N$ by the Euclidean metric in
$\Real^3$. Thus $ \norm{\C(x)-\C(y)} \leq 3M/2 \norm{x-y}$ when
$\norm{x-y} $ is sufficiently small. Since on the other hand $\N$
is compact and thus bounded,  $ \norm{\C(x)-\C(y)}$ is bounded
uniformly in $x$, $y$, regardless of $\norm{x-y}$. It follows that
there exists a constant $\lambda$, depending only on the geometric
properties of the surfaces $\M$ and $\N$, and on $M$, such that
$$
\Big | \norm{R(x)-\C(x)} - \norm{R(y)-\C(y)} \Big |  \leq\lambda \norm{x-y}\,.
$$
Suppose $q$ attains its maximum $Q$ in $u\in \M$.
Set $\alpha = \min \Big(\rho, \frac{Q}{2\lambda}\Big)$, with $\rho>0$
as in Lemma
\ref{lem:ingrid_condition}.
Then we must have
\begin{align*}
\int_{\set{x\in\M|\,\norm{u-x}<\alpha}}\, \max(0, Q-\lambda
\norm{u-x})^2\,\,d\vol_\M(x) &\leq
\int_{\set{x\in\M|\,\norm{u-x}<\alpha}}\,  \norm{R(x)-\C(x)}^2
\\ &\leq \dpc(\M,\N;\C)^2\leq \epsilon^2\,.
\end{align*}

On the other hand, we also have, by Lemma
\ref{lem:ingrid_condition}, and using $Q-\lambda\norm{x-u} \geq Q-\lambda \alpha$
on $ \set{x\in\M|\,\norm{u-x}<\alpha}$,
\begin{align*}
\int_{\set{x\in\M|\,\norm{u-x}<\alpha}}\, &\max(0, Q-\lambda
\norm{u-x})^2\,\,d\vol_\M(x)  \geq
\int_{\set{x\in\M|\,\norm{u-x}<\alpha}}\,
(Q-\lambda\alpha)^2\,\,d\vol_\M(x) \\
&= \,(Q-\lambda\alpha)^2\,
\int_{\set{x\in\M|\,\norm{u-x}<\alpha}}\,d\vol_\M(x)
\, \geq \,\frac{Q^2}{4} \,\Gamma \alpha^2
=\, \frac{\Gamma}{4}\, Q^2 \min \Big(\rho, \frac{Q}{2\lambda}\Big)^2.
\end{align*}
This implies, in particular, that
$$
\frac{\Gamma}{4}\, Q^2 \min \Big(\rho, \frac{Q}{2\lambda}\Big)^2\leq \, \epsilon^2 \,.
$$
If $Q/(2 \lambda) > \rho $, then it follows that $\Gamma \,Q^2\,\rho^2/4 < \epsilon^2$,
hence (by using $Q/(2 \lambda) > \rho $ once again) $\Gamma \lambda^2 \rho^4 <\epsilon^2$. Note that $\Gamma$, $\lambda$ and $\rho$ are constants that depend on only the geometrical bounds that we impose on $\M$, $\N$
separately; {\em a priori} they bear no relationship to whether or not the continuous Procrustes distance between the surfaces is small. With a left hand side independent
of $\epsilon$ and strictly positive, the inequality above can therefore not
be satisfied  if $\epsilon$ is sufficiently small; more precisely, if
$\epsilon \leq \Gamma^{1/2}\, \lambda \,\rho^2  $), then this case is excluded.

For sufficiently small $\epsilon$, we have thus $Q/(2 \lambda) \leq \rho $, implying
$\Gamma \,Q^4/(16 \lambda^2) \leq \epsilon^2$, or $Q \leq 2\,\lambda^{1/2}\,\Gamma ^{-1/4}\,
\epsilon^{1/2}$. In other words,
there exists a constant $C_1>0$, dependent on only
$\lambda$, $\rho$, $M$ and $\kappa$, such that, for
sufficiently small $\epsilon$,
$$\mathop{\max}_{x\in\M}\norm{R(x) - C(x)} = Q \leq C_1 \,\epsilon^{1/2}\,,$$
which is the desired uniform bound on $\norm{R(x) - C(x)} $.

Second, by Lemma \ref{lem:div_diff_approx_norm_differential} we can
take $y\in \partial B_g(x,\epsilon^{1/4})$ such that
$$\frac{\norm{\C(y)-\C(x)}}{\norm{x-y}} = \norm{D \C_x}_{g,h} + O(\epsilon^{1/4}).$$
Using the triangle inequality as well as
$\norm{R(x)-R(y)}=\norm{x-y}$, and applying Lemma
\ref{lem:d_g_x_y_minus_norm_x_y}, we obtain
$$\frac{\norm{\C(y)-\C(x)}}{\norm{x-y}} \leq
\frac{\norm{\C(y)-R(y)}+\norm{x-y}+\norm{R(x)-\C(x)}}{\norm{x-y}}\leq
1 + O(\epsilon^{1/4}),$$ and thus
$$ \norm{D \C_x}_{g,h} \leq 1 + O(\epsilon^{1/4}).$$
Lastly, since $\norm{D\C_x}_{g,h}$ equals the larger singular value
of the matrix for $D\C_x$ w.r.t. orthonormal bases of $T_x\M$ and
$T_{\C(x)}\N$ (respectively), and since $\C$ is area-preserving
(implying that the determinant of this $2 \times 2$ matrix equals 1)
the conformal distortion of $\C$ at $x$ is $\norm{D \C_x}_{g,h}^2$,
and thus
$$dis_\C(x) \leq 1 + O(\epsilon^{1/4}).$$
\end{proof}
%
%
%
{ Theorem \ref{thm:small_procrustes_dist_means_almost_conformal}
tells us that area-preserving diffeomorphisms associated to small
surface Procrustes distances have small conformal distortion
everywhere. We will next use the theory of quasi-conformal (QC) maps
to see that, for disk-type surfaces, this implies that such maps then must be ``close'' to conformal maps.

For the sake of convenience, we restrict our discussion here to the case of disk-type surfaces here
(similar results
can be shown for sphere-type surfaces).
More precisely, we start with two disk type surfaces $\M, \N\subset \Real^3$ with
induced metric tensors $g,\,h$ (respectively),
and we consider a global conformal parametrization
(uniformization) of each onto their canonical domain,
$\Psi:\M \too \D$, $\Psi':\N \too \D$.
The surfaces are then
intrinsically represented by their conformal factors $\mu$ and
$\nu$. In other words, the push-forward metric tensors of $\M, \, \N$
under the maps $\Psi$, $\Psi'$ are given by  $(\Psi_* g)[z] =
\mu(z) dz d\bar{z}$, and $(\Psi'_* h)[w] = \nu(w) dw d\bar{w}$,
respectively. The conformal factors also act as ``density
functions'' in the sense that the area in $\M$ of an arbitrary Borel
set $\Omega\subset \M$ can be written as $\vol_\M(\Omega) =
\int_{\Psi(\Omega)} \mu(z) dx dy$, where $z=x+\bfi y$; similarly
for the surface $\N$.

Now every conformal mapping from $\M $ to $\N$ can be written as
$\Psi'^{-1} \circ m \circ \Psi$, where $m$ ranges over the
M\"{o}bius transformations of the unit disk that preserve its
boundary:
\begin{equation}\label{e:disk_mobius}
    m(z)=e^{\bfi \theta }\frac{z-a}{1-z\bar{a}},
\end{equation}
where $\theta \in [0,2\pi)$, $a\in \D$. This family of
transformations has three degrees of freedom (one for the angle and
two for the complex number $a$); we denote the family by $\Mob(\D)$.

Likewise an area preserving (and orientation preserving) map $\C$ from $\M$ to $\N$
can be ``transported'' to $\D$ by means of
$\Psi$ and $\Psi'$, leading us to consider instead $\C_{\mbox{\tiny{tr}}}:=\Psi'\circ\C \circ \Psi^{-1}$,
mapping $\D$ to itself.
We will use QC theory to show that, if $\dpc(\M,\N;\C)$
is small, then $\C_{\mbox{\tiny{tr}}}$ is close to an element of $\Mob(\D)$,
with respect to the maximum norm over the unit disk $\D=\set{z \in \CC\ \mid \
\abs{z}<1}$, at least if $\C$ is
orientation preserving.  If it is orientation
reversing, it is close to an anti-conformal map. We provide
details below for the orientation preserving case; the reversing case is entirely similar.

By an appropriate M\"{o}bius change of coordinates $\wt{m}$,
replacing $\Psi$ by $\wt{\Psi'} = \wt{m}\circ \Psi'$, we
can even ensure
that $\wt{\C_{\mbox{\tiny{tr}}}}:= \wt{m}\circ\C_{\mbox{\tiny{tr}}}$
has $0$ and $1$ as fixed points. Abusing notation, and denoting
$\wt{\C_{\mbox{\tiny{tr}}}}$ by $\C$ again, we thus assume
$\C(0)=0$, and $\C(1)=1$. We shall show that $\C$ is close to the identity, which
means that $\C_{\mbox{\tiny{tr}}} $ is close to $\wt{m}^{-1}$, and thus that
the original area preserving map from $\M$ to $\N$ is close to the conformal
map from $\M$ to $\N$ given by $ (\Psi')^{-1}\circ \wt{m}^{-1} \circ \Psi$.

We consider, as is very customary in complex analysis, derivatives with respect
to $z$ and $\bbar{z}$ of
the differentiable map $\C$ from the subset $\D$ of $\CC$ to itself, i.e.
$$
\frac{\partial \C}{\partial z}= \frac{\partial \C}{\partial x}\,-\,i\,\frac{\partial \C}{\partial y}
\quad\mbox{ and } \quad
\frac{\partial \C}{\partial \bbar{z}}= \frac{\partial \C}{\partial x}\,+\,i\,\frac{\partial \C}{\partial y}
\,\,,\quad\mbox{  where }\quad z=x+iy.
$$
We define
the {\em complex dilation} of $\C$ by
$$
\varrho
= \frac{\partial \C}{\partial \bbar{z}}\mbox{\Huge{/}}\frac{\partial \C}{\partial z}.
$$
For orientation preserving $\C$ we have (see
for example, \cite{ahlfors1966lectures}):
$$
dis_{\C} = \frac{1+\abs{\varrho}}{1-\abs{\varrho}} \ , \ \abs{\varrho} = \frac{dis_\C -1}{dis_\C +1}.
$$
Theorem \ref{thm:small_procrustes_dist_means_almost_conformal}
therefore implies, uniformly on $\D$,
\begin{equation}
\label{rho_order}
\varrho = O(\epsilon^{1/4}).
\end{equation}

We will use the following existence and uniqueness theorem for the
Beltrami equation (see \cite{imayoshi1992introduction}, Theorem
4.30):
\begin{thm}\label{thm:exitence_sol_Beltrami_eq}
For every $\varrho:\CC\too \CC$ measurable such that
$\norm{\varrho}_\infty < 1$, there exists a homeomorphism $f$ of
$\CC$ onto $\CC$ which is a quasiconformal mapping of $\CC$ with
complex dilation $\varrho$. Moreover, $f$ is uniquely determined by
the following normalization conditions: $f(0)=0$, $f(1)=1$, and
$f(\infty)=\infty$.
\end{thm}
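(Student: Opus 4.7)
This is a classical existence–uniqueness result in the theory of quasiconformal maps; the route I would take is through the Beurling–Ahlfors transform and a Banach fixed-point argument. After reducing to the case of compactly supported $\varrho$ by truncating $\varrho_n=\varrho\,\chi_{\{\abs{z}\leq n\}}$ and passing to the limit afterwards, I seek $f$ in the form $f(z)=z+h(z)$ with $h(z)\too 0$ as $\abs{z}\too\infty$, so that the Beltrami equation $\partial_{\bar{z}}f=\varrho\,\partial_z f$ becomes
$$\partial_{\bar{z}}h \,-\, \varrho\,\partial_z h \;=\; \varrho.$$
Introduce the Cauchy transform $(Tg)(z)=-\pi^{-1}\int_\CC g(\zeta)(\zeta-z)^{-1}\,dA(\zeta)$ and its $z$-derivative, the Beurling--Ahlfors transform $(Sg)(z)=-\pi^{-1}\,\mathrm{p.v.}\!\int_\CC g(\zeta)(\zeta-z)^{-2}\,dA(\zeta)$, satisfying the distributional identities $\partial_{\bar{z}}Tg=g$ and $\partial_z Tg=Sg$. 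Setting $h=Tg$ converts the PDE into the integral equation
$$(I-\varrho\,S)\,g \,=\, \varrho.$$

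By Calder\'on--Zygmund theory, $S$ is bounded on $L^p(\CC)$ for every $1<p<\infty$, with operator norm continuous in $p$ and equal to $1$ at $p=2$. Since $\norm{\varrho}_\infty=:k<1$, I can pick $p>2$ close enough to $2$ so that $k\,\norm{S}_{L^p\to L^p}<1$; the Neumann series then gives a unique $g\in L^p(\CC)$ solving the integral equation. For this $p>2$, Sobolev/Calder\'on--Zygmund estimates show that $Tg$ is H\"older continuous on $\CC$ with $Tg(\infty)=0$, so $f(z)=z+Tg(z)$ is continuous on $\CC$ and extends by $f(\infty)=\infty$. Distributionally $\abs{\partial_{\bar{z}}f}\leq k\,\abs{\partial_z f}$ a.e., the defining inequality of $K$-quasiconformality with $K=(1+k)/(1-k)$; Stoilow's factorization (or local Weyl-type arguments) then shows that $f$ is open and discrete. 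The asymptotic behavior $f(z)=z+o(1)$ at infinity forces the topological degree to equal $1$, which combined with openness and discreteness yields global injectivity, so $f$ is a homeomorphism of $\CC$ onto $\CC$.

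For the normalization, observe that for any $\phi\in\Mob(\CC)$ the post-composition $\phi\circ f_0$ remains a homeomorphic solution with the same complex dilation, since conformal post-composition preserves $\varrho$. The three complex degrees of freedom in $\phi$ can be used uniquely to send the images of $0,1,\infty$ under $f_0$ to the prescribed $0,1,\infty$. For uniqueness, given two normalized homeomorphic solutions $f_1,f_2$, the composition $\psi:=f_2\circ f_1^{-1}$ is a homeomorphism of $\CC$ whose complex dilation vanishes almost everywhere; by Weyl's lemma $\psi$ is holomorphic, hence M\"obius, and the prescribed fixing of $0,1,\infty$ forces $\psi=\mathrm{id}$.

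The principal obstacle is the analytic–topological step of upgrading the $L^p$ weak solution $f=z+Tg$ to an actual homeomorphism of $\CC$: this requires both the Calder\'on--Zygmund $L^p$-continuity of $S$ together with its continuous dependence on $p$ near $p=2$ (to obtain $g\in L^p$ for some $p>2$, which in turn yields the H\"older regularity needed for continuity of $f$), and the Stoilow-type regularity asserting that a non-constant quasiregular map is open and discrete. The fixed-point step itself is then a routine application of Banach's contraction mapping theorem.
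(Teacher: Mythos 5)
The paper does not actually prove this statement: it is imported verbatim, with citation, as Theorem 4.30 of Imayoshi--Taniguchi, so there is no internal proof to compare against. Your outline is the classical Ahlfors--Bers proof of the measurable Riemann mapping theorem --- Cauchy and Beurling--Ahlfors transforms, a Neumann series for $(I-\varrho S)g=\varrho$ in $L^p$ for some $p>2$ chosen via the continuity of $\norm{S}_{L^p\to L^p}$ near $p=2$, H\"older regularity of $Tg$ to get continuity of $f=z+Tg$, openness/discreteness plus a degree argument for injectivity, and a Weyl-lemma argument for uniqueness --- which is essentially the same machinery used in the cited source, and it is sound as a sketch. Two steps that you mention only in passing would need real work in a complete proof: (i) the reduction to compactly supported $\varrho$ and ``passing to the limit'' is not automatic; one needs either the standard factorization trick that conjugates the part of $\varrho$ near $\infty$ by $z\mapsto 1/z$, or a normal-family/compactness theorem for normalized $K$-quasiconformal maps together with continuous dependence of normalized solutions on the Beltrami coefficient; (ii) the uniqueness step, where you pass to $\psi=f_2\circ f_1^{-1}$ and assert that its complex dilation vanishes a.e., relies on the composition formula for complex dilatations, which in turn requires nontrivial facts about quasiconformal homeomorphisms (a.e. differentiability, nonvanishing Jacobian a.e., preservation of null sets) before Weyl's lemma can be invoked. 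With those ingredients made explicit, your argument is correct and matches the standard proof underlying the theorem the paper quotes.
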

As is customary, we will call \emph{normalized solution} any solution of a Beltrami
equation that satisfies the normalization conditions.

Theorem \ref{thm:exitence_sol_Beltrami_eq} requires the complex dilation $\rho$ to be defined
on all of $\CC$. Before applying it, we thus need to first obtain  $\rho$ on all of $\CC$, which
we do by
extending $\C$ from $\D$ to the entire complex plane $\CC$ by reflection:
$$\wh{\C}(z) = \left \{ \begin{array}{cc}
                            \C(z) & \abs{z}\leq 1 \\
                            1\Big/\bbar{\C(1/\bbar{z})} & \abs{z}>1
                          \end{array}
\right .
$$
Note that the extension $\wh{\C}$
is normalized, that is, it satisfies $\wh{\C}(0)=0,\, \wh{\C}(1)=1, \,
\wh{\C}(\infty)=\infty$. Moreover, this extension preserves the
conformal distortion, that is, for $\abs{z}>1$:
$$dis_{\wh{\C}}(z) = dis_\C(1/\bbar{z}).$$
It follows that this extension of $dis_{\C}$ to all of $\CC$ still satisfies (\ref{rho_order}).
We now have

\begin{lem}\label{lem:barC_satisfy_beltrami}
The extension $\wh{\C}:\CC\too\CC$ is the unique normalized solution to the
following Beltrami equation:
$$\wh{C}_{\bbar{z}}(z) = \wh{\varrho}(z) \, \wh{\C}_z(z),$$
where $\wh{\varrho}$ is a complex dilation (a.k.a. Beltrami
coefficient) defined by
$$\wh{\varrho}(z) = \left \{ \begin{array}{cc}
                            \varrho(z) & \abs{z}<1\\
                            0 & \abs{z}=1 \\
                            \bbar{\varrho(1/\bbar{z})} \parr{\frac{z^2}{\bbar{z}^2}} & z \in \abs{z}>1
                          \end{array}
  \right .$$
\end{lem}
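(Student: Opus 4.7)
The plan is to reduce the statement to Theorem \ref{thm:exitence_sol_Beltrami_eq}. I would verify three things: (i) $\wh{\C}$ is a homeomorphism of $\CC$, (ii) $\wh{\C}(0)=0$, $\wh{\C}(1)=1$, $\wh{\C}(\infty)=\infty$, and (iii) $\wh{\C}$ solves $\wh{\C}_{\bbar{z}}=\wh{\varrho}\,\wh{\C}_z$ pointwise away from $\abs{z}=1$, with $\|\wh{\varrho}\|_\infty<1$. Uniqueness is then immediate from that theorem applied to any other normalized quasiconformal solution of the same Beltrami equation.

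Items (i) and (ii) rest on the single observation that $\C$ maps $\partial\D$ to itself, which holds because $\C$ is an area-preserving diffeomorphism of $\D$ onto itself. For $\abs{z}=1$ one has $1/\bbar{z}=z$ and $\abs{\C(z)}=1$, so $1/\overline{\C(z)}=\C(z)$; the two branches of the definition of $\wh{\C}$ agree on the unit circle, giving continuity, and the three normalization points follow from $\C(0)=0$, $\C(1)=1$, together with $\wh{\C}(\infty)=\lim_{z\to\infty}1/\overline{\C(1/\bbar{z})}=1/\overline{\C(0)}=\infty$. That $\wh{\C}$ is a global homeomorphism of $\CC$ follows formally from the construction of an inverse by the same reflection formula.

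The substantive step is (iii): the computation of the complex dilation of $\wh{\C}$ on $\abs{z}>1$. Setting $\zeta=1/\bbar{z}$, one has $\zeta_z=0$, $\zeta_{\bbar{z}}=-1/\bbar{z}^2$, $\bbar{\zeta}_z=-1/z^2$, $\bbar{\zeta}_{\bbar{z}}=0$; combining the chain rule with the Wirtinger identities $(\overline{f})_z=\overline{f_{\bbar{z}}}$ and $(\overline{f})_{\bbar{z}}=\overline{f_z}$, the function $F(z):=\overline{\C(1/\bbar{z})}$ satisfies
\[
F_z=-\frac{\overline{\C_\zeta(1/\bbar{z})}}{z^2},\qquad F_{\bbar{z}}=-\frac{\overline{\C_{\bbar{\zeta}}(1/\bbar{z})}}{\bbar{z}^2},
\]
and since $\wh{\C}=1/F$ on $\abs{z}>1$ the factor $-F^{-2}$ cancels in the ratio, yielding
\[
\frac{\wh{\C}_{\bbar{z}}}{\wh{\C}_z}=\frac{F_{\bbar{z}}}{F_z}=\frac{z^2}{\bbar{z}^2}\,\overline{\varrho(1/\bbar{z})},
\]
which is exactly $\wh{\varrho}(z)$. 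On $\D$ the Beltrami equation reduces to the definition of $\varrho$, and the circle $\abs{z}=1$ is a null set so the choice $\wh{\varrho}\equiv 0$ there is irrelevant. The bound $\|\wh{\varrho}\|_\infty<1$ follows from $\abs{\wh{\varrho}(z)}=\abs{\varrho(1/\bbar{z})}$ together with the smallness estimate $\abs{\varrho}=O(\epsilon^{1/4})$ established in Theorem \ref{thm:small_procrustes_dist_means_almost_conformal}. The only step that requires genuine care is the Wirtinger chain-rule calculation above: two non-holomorphic operations (outer conjugation and the reflection $z\mapsto 1/\bbar{z}$) are being composed with the non-holomorphic $\C$, and the separate bookkeeping of $\partial_z$ and $\partial_{\bbar{z}}$ contributions is where any sign error or transposition could creep in; everything else is formal.
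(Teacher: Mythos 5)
Your proposal is correct and follows essentially the same route as the paper, which simply states that "a straightforward calculation" gives the Beltrami coefficient $\wh{\varrho}$ almost everywhere and then invokes the normalization together with Theorem \ref{thm:exitence_sol_Beltrami_eq} for uniqueness. You have merely written out that straightforward calculation (the Wirtinger chain rule for $z\mapsto 1/\bbar{z}$ and the outer conjugation, with the holomorphic factor from $w\mapsto 1/w$ cancelling in the ratio) and the boundary-matching/normalization checks explicitly, all of which are accurate.
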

\begin{proof}
A straightforward calculation shows that $\wh{\C}$ has the Beltrami
coefficient $\wh{\varrho}$ almost everywhere. As mentioned above,
$\wh{\C}$ satisfies the normalization conditions of Theorem
\ref{thm:exitence_sol_Beltrami_eq} and therefore the uniqueness follows from 
Theorem
\ref{thm:exitence_sol_Beltrami_eq}.
\end{proof}

We will next use Proposition 4.36 from
\cite{imayoshi1992introduction}, the statement of which is:
\begin{thm}\label{thm:dependence_on_Beltrami_coef}
If $\norm{\varrho}_\infty \too 0$, then the normalized solution of
the Beltrami equation $f^\varrho$ converges to the identity in the
maximum norm on $\D$, $\norm{f^\varrho-\mbox{\it{Id}}}_\infty \too 0$, where
$\mbox{\it{Id}}(z)=z$.
\end{thm}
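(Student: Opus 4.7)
The plan is to prove Theorem \ref{thm:dependence_on_Beltrami_coef} via a standard compactness argument for normalized quasiconformal maps, combined with the rigidity of $1$-quasiconformal self-maps of the Riemann sphere. Given a sequence $\varrho_n$ with $\norm{\varrho_n}_\infty \too 0$, the corresponding normalized solutions $f^{\varrho_n}$ are $K_n$-quasiconformal homeomorphisms of $\CC$ with maximal dilation $K_n = (1+\norm{\varrho_n}_\infty)/(1-\norm{\varrho_n}_\infty) \too 1$, each fixing $0$, $1$, $\infty$. I will show that any such sequence has every subsequence possessing a further subsequence converging to $\mbox{\it{Id}}$ uniformly on $\bbar{\D}$, which is equivalent to the full sequence converging to $\mbox{\it{Id}}$ in the maximum norm on $\D$.

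The first step is to establish equicontinuity on compact subsets of $\CC$. For this I would invoke the classical Mori-type distortion theorem: a normalized $K$-quasiconformal self-homeomorphism $f$ of $\CC$ fixing $0$, $1$, $\infty$ satisfies a H\"older estimate of the form $\abs{f(z)-f(w)} \leq M(K,r)\,\abs{z-w}^{1/K}$ for $z,w$ in the disk of radius $r$, with $M(K,r)$ bounded as $K$ stays in a compact interval. Since $K_n \too 1$, the family $\set{f^{\varrho_n}}$ is uniformly equicontinuous on every compact subset of $\CC$. By Arzel\`a-Ascoli, any subsequence admits a further subsequence converging locally uniformly to some continuous $f^* : \CC \too \CC$, which still satisfies $f^*(0)=0$, $f^*(1)=1$, $f^*(\infty)=\infty$.

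The second step is to identify the limit $f^*$. Here I would apply a classical convergence theorem for quasiconformal maps (e.g.\ Lehto-Virtanen): if $f_{n_k}$ are $K_{n_k}$-quasiconformal with $K_{n_k} \too 1$ and $f_{n_k} \too f^*$ locally uniformly, then $f^*$ is a $1$-quasiconformal self-homeomorphism of $\CC$. By Weyl's lemma (or the fact that $1$-quasiconformal maps are conformal), $f^*$ extends to a holomorphic automorphism of the Riemann sphere $\wh{\CC}$, hence a M\"obius transformation. The normalization $f^*(0)=0$, $f^*(1)=1$, $f^*(\infty)=\infty$ then forces $f^*(z)=z$.

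The final step is routine: since every subsequence has a further subsequence converging to $\mbox{\it{Id}}$ and the limit is unique, the whole sequence $f^{\varrho_n}$ converges to $\mbox{\it{Id}}$ locally uniformly on $\CC$, in particular uniformly on the compact set $\bbar{\D}$, which yields $\norm{f^{\varrho_n}-\mbox{\it{Id}}}_\infty \too 0$. The main technical obstacle is the uniform H\"older/Mori estimate providing equicontinuity across the sequence: this is a nontrivial piece of geometric function theory, but is standard in the quasiconformal literature. The second nontrivial input is the compactness/limit statement that limits of $K_n$-quasiconformal maps with $K_n \too 1$ are conformal; both ingredients are available in \cite{imayoshi1992introduction} and constitute the reason the authors simply cite the result rather than prove it from scratch.
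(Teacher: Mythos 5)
Your argument is correct: uniform Hölder equicontinuity of normalized $K$-quasiconformal self-maps of $\CC$ fixing $0,1,\infty$, Arzel\`a--Ascoli, the convergence theorem that a locally uniform limit of $K_n$-quasiconformal homeomorphisms with $K_n \too 1$ is $1$-quasiconformal (hence M\"obius, hence $\mbox{\it{Id}}$ under the normalization), and the subsequence-of-subsequence trick do establish $\norm{f^\varrho-\mbox{\it{Id}}}_\infty \too 0$. However, your route is genuinely different from the paper's: the paper does not prove this statement at all, but quotes it as Proposition 4.36 of \cite{imayoshi1992introduction}, whose proof is constructive, representing the normalized solution through the singular-integral (Calder\'on--Zygmund/Beurling transform) machinery of the measurable Riemann mapping theorem and a Neumann-series expansion in $\varrho$. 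That explicit representation is what lets the authors immediately upgrade the statement to the quantitative version they actually need, namely $\norm{f^\varrho-\mbox{\it{Id}}}_\infty \leq M \norm{\varrho}_\infty$ for small $\norm{\varrho}_\infty$ (their Theorem \ref{thm:dependence_on_Beltrami_coef_2}), which feeds the $O(\epsilon^{1/4})$ bound in Theorem \ref{thm:MAIN_RESULT_low_cpd_imlpies_close_to_conformal}. Your normal-families argument buys a softer, more self-contained proof of the qualitative convergence, avoiding the integral-equation construction, but it is purely compactness-based and yields no rate of convergence in $\norm{\varrho}_\infty$; to recover the linear dependence one would still have to return to the constructive proof (or add a quantitative stability estimate for nearly $1$-quasiconformal maps), so your approach, while valid for the theorem as stated, could not simply replace the citation in the paper's subsequent use of it.
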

The proof of Proposition 4.36 in \cite{imayoshi1992introduction}  actually
demonstrates a slightly stronger claim:
\begin{thm}\label{thm:dependence_on_Beltrami_coef_2}
If $\norm{\varrho}_\infty \too 0$, then the normalized solution of
the Beltrami equation $f^\varrho$ satisfies
$$\norm{f^\varrho-\mbox{\it{Id}}}_\infty \leq M \norm{\varrho}_\infty, $$ on
$\D$, for some constant $M>0$ independent of sufficiently small
$\varrho$.
\end{thm}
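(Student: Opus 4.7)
The plan is to revisit the integral-equation construction of the normalized solution $f^\varrho$ (as detailed in \cite{ahlfors1966lectures,imayoshi1992introduction}) and to track quantitatively how $f^\varrho$ depends on $\varrho$ when $\norm{\varrho}_\infty$ is small. Writing $P$ for the Cauchy transform and $T$ for the Beurling--Ahlfors transform,
\[
(Pg)(z) = -\frac{1}{\pi}\iint_{\CC} \frac{g(\zeta)}{\zeta - z}\,dA(\zeta), \qquad
(Tg)(z) = -\frac{1}{\pi}\,\mathrm{p.v.}\!\iint_{\CC}\frac{g(\zeta)}{(\zeta-z)^2}\,dA(\zeta),
\]
one has $(Pg)_{\bbar{z}} = g$ and $(Pg)_{z} = Tg$ in the sense of distributions. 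Consequently, setting $f^\varrho(z) = z + (P\omega)(z) + \ell(z)$ with $\ell$ an affine correction chosen so that $f^\varrho(0)=0$, $f^\varrho(1)=1$, the Beltrami equation is equivalent to the fixed-point equation
\[
\omega = \varrho + \varrho\,T\omega.
\]

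The first step is to invert $I - \varrho\,T$ by a Neumann series. Since $T$ is an $L^2(\CC)$-isometry and is bounded on $L^p(\CC)$ with operator norm tending to $1$ as $p\to 2$, for $\norm{\varrho}_\infty$ small enough the map $g \mapsto \varrho\,T g$ is a strict contraction on some $L^p(\CC)$ with $p>2$. This yields $\omega = \sum_{k=0}^{\infty} (\varrho\,T)^k \varrho$ and, crucially, the linear estimate $\norm{\omega}_{L^p(\CC)} \leq C_p\,\norm{\varrho}_\infty$. The second step converts this into a uniform bound on $f^\varrho - \mbox{\it{Id}}$: for $p>2$, the Cauchy transform $Pg$ is H\"{o}lder continuous of exponent $1 - 2/p$ with H\"{o}lder norm controlled by $\norm{g}_{L^p(\CC)}$, so $\norm{P\omega}_{L^\infty(\D)} \leq C\,\norm{\omega}_{L^p(\CC)}$. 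The affine correction $\ell$ is determined by the values of $P\omega$ at $0$ and $1$ and inherits the same linear bound. Combining these steps yields $\norm{f^\varrho - \mbox{\it{Id}}}_{L^\infty(\D)} \leq M\,\norm{\varrho}_\infty$, with $M$ depending only on the fixed exponent $p$ and the smallness threshold used to guarantee contraction.

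The main obstacle, to my mind, is not the existence of the bound (since Theorem \ref{thm:dependence_on_Beltrami_coef} already provides a qualitative version), but rather keeping the estimate genuinely \textit{linear} in $\norm{\varrho}_\infty$ rather than merely continuous. The linearity is delivered precisely by the Neumann-series representation of $\omega$ together with the boundedness of $P$ as a linear map from $L^p$ into a H\"{o}lder space on compacta. A mild subtlety is that in our setting $\wh{\varrho}$ is not compactly supported, but the uniform bound $\norm{\wh{\varrho}}_\infty = \norm{\varrho}_\infty$ together with working in $L^p(\CC)$ directly absorbs this without trouble. Once the fixed-point equation and the smallness threshold are in place, the remainder of the argument is linear functional-analytic bookkeeping.
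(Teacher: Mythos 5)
Your overall strategy (Neumann series for $\omega = \varrho + \varrho\,T\omega$ in $L^p$ with $p>2$, plus the H\"{o}lder estimate for the Cauchy transform) is the standard quantitative route, and it is essentially the machinery behind the result the paper invokes; note that the paper itself offers no proof here, only the observation that the proof of Proposition 4.36 in the cited reference already yields the linear rate. However, your write-up has a genuine gap exactly at the point you dismiss as a ``mild subtlety.'' In the setting at hand the coefficient is $\wh{\varrho}$, obtained by reflection, and it does \emph{not} decay at infinity: $\abs{\wh{\varrho}(z)}=\abs{\varrho(1/\bbar{z})}$ for $\abs{z}>1$, so generically $\wh{\varrho}\notin L^p(\CC)$ for any finite $p$. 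Then the inhomogeneous term of your fixed-point equation is not in $L^p(\CC)$, the Neumann series does not converge there, and the key linear estimate $\norm{\omega}_{L^p}\leq C_p\norm{\varrho}_\infty$ is unavailable -- that estimate secretly uses $\norm{\varrho}_{L^p}\leq \norm{\varrho}_\infty\,\abs{\mathrm{supp}\,\varrho}^{1/p}$, i.e.\ finite-measure support. Relatedly, the plain Cauchy transform you wrote down diverges for non-decaying integrands; one must use the normalized kernel $\frac{1}{\zeta-z}-\frac{1}{\zeta}$ (or a three-point-normalized variant). So the step that produces linearity is precisely the step that fails for the coefficient actually appearing in the paper.

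The repair requires a real argument, not bookkeeping. Either (i) factor $f^{\wh{\varrho}}=g\circ h$, where $h$ is the normalized solution for the truncated coefficient $\wh{\varrho}$ restricted to $\bbar{\D}$ (compactly supported, so your argument applies) and $g$ has coefficient supported in $\abs{z}\geq 1$, which is handled by conjugating with $z\mapsto 1/z$ to move the support back to a bounded set; one then checks that the coefficient of $g$ is still $O(\norm{\varrho}_\infty)$ and tracks how the $0,1,\infty$ normalization moves under each factor. Or (ii) use the representation/variational formula with kernel $R(\zeta,z)=\frac{1}{\zeta-z}-\frac{z}{\zeta-1}+\frac{z-1}{\zeta}$, which decays like $\abs{\zeta}^{-3}$ and therefore tolerates merely bounded coefficients; this is essentially how the cited proof obtains uniform control and is the cleanest way to the stated bound on $\D$. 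A smaller slip: an additive affine correction $\ell(z)=a+bz$ with $b\neq 0$ changes the Beltrami coefficient of $z+P\omega+\ell$ (the equation becomes $\omega=(1+b)\varrho+\varrho T\omega$); the normalization should instead be imposed by post-composing with the conformal affine map $w\mapsto (w-f(0))/(f(1)-f(0))$, which preserves the coefficient and is itself within $O(\norm{\varrho}_\infty)$ of the identity on $\bbar{\D}$, so this part of your estimate survives once rephrased.
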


Combining Theorems
\ref{thm:small_procrustes_dist_means_almost_conformal} and
\ref{thm:dependence_on_Beltrami_coef_2} finally yields
\begin{thm}\label{thm:MAIN_RESULT_low_cpd_imlpies_close_to_conformal}
Let $\M, \, \N\subset \Real^3$ be  {\rm 2}-manifolds with induced
Riemannian metrics $g,\,h$ (respectively), with curvatures bounded
above by $\kappa$, and satisfying the $(\sigma,\theta)$-cone
condition. We consider area-preserving and orientation-preserving
diffeomorphisms $\C:\M\too\N$ with first and second order
differentials bounded by $M$. Let $\Psi:\M\too\D$, $\Psi':\N\too\D$
be uniformizing maps of $\M, \, \N$ onto the disk. Let $m$ be a
disk-preserving M\"{o}bius transformation such that $f=m\circ \Psi'
\circ \C \circ \Psi^{-1}:\D\too\D$ satisfies $f(0)=0,f(1)=1$.
Then the bound $\dpc(\M,\N;\C) \leq \epsilon$ implies the following
bound:
$$
\norm{f - \mbox{\it{Id}}}_\infty = O(\epsilon^{1/4}),
$$
where $\mbox{\it{Id}}(z)=z$ is the identity map, and where the constant in the
$O$-notation depends on only $\kappa$, $\sigma$, $\theta$ and $\M$.
\end{thm}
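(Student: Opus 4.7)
The plan is to assemble the result by combining the conformal distortion bound from Theorem \ref{thm:small_procrustes_dist_means_almost_conformal} with the stability estimate for the Beltrami equation stated in Theorem \ref{thm:dependence_on_Beltrami_coef_2}, using the reflection extension of Lemma \ref{lem:barC_satisfy_beltrami} as the bridge.

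First I would transport the conformal distortion estimate from the surfaces to the disk. Since the uniformizations $\Psi$ and $\Psi'$ and the M\"obius transformation $m$ are all (anti-)conformal, and since the conformal distortion $dis$ is a pointwise invariant of the differential under composition with conformal maps (it depends only on the ratio of singular values of the differential expressed in orthonormal frames, and conformal maps scale these equally), the map $f = m\circ \Psi'\circ \C \circ \Psi^{-1}:\D\too\D$ has the same pointwise conformal distortion as $\C$. Theorem \ref{thm:small_procrustes_dist_means_almost_conformal} then yields $\sup_{z\in\D} dis_f(z) \leq 1 + O(\epsilon^{1/4})$. Using the identity $|\varrho| = (dis_f - 1)/(dis_f + 1)$ recalled just before Theorem \ref{thm:exitence_sol_Beltrami_eq}, this immediately gives
\[
\|\varrho\|_{L^\infty(\D)} = O(\epsilon^{1/4}).
\]

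Next I would extend $f$ to all of $\CC$ by the reflection prescription $\wh{f}(z) = 1/\bbar{f(1/\bbar{z})}$ for $|z|>1$, exactly as done in the paragraphs preceding Lemma \ref{lem:barC_satisfy_beltrami}. The extension preserves the three normalization conditions $\wh{f}(0)=0$, $\wh{f}(1)=1$, $\wh{f}(\infty)=\infty$, and by Lemma \ref{lem:barC_satisfy_beltrami} it is the unique normalized solution to the Beltrami equation with coefficient $\wh{\varrho}$ on $\CC$. Because the reflection formula for $\wh{\varrho}$ only multiplies $\varrho(1/\bbar{z})$ by a unimodular factor on $|z|>1$, we obtain $\|\wh{\varrho}\|_{L^\infty(\CC)} = \|\varrho\|_{L^\infty(\D)} = O(\epsilon^{1/4})$.

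Finally I would feed this into Theorem \ref{thm:dependence_on_Beltrami_coef_2}: for $\epsilon$ small enough, $\|\wh{\varrho}\|_\infty$ is smaller than the threshold required by that theorem, and we get
\[
\|\wh{f} - \mbox{\it{Id}}\|_{L^\infty(\D)} \leq M\, \|\wh{\varrho}\|_{L^\infty(\CC)} = O(\epsilon^{1/4}).
\]
Restricting to $\D$, where $\wh{f}=f$, yields the claimed bound. The only potential obstacle worth flagging is the first step, namely verifying cleanly that the conformal distortion transfers unchanged under the uniformization maps and the M\"obius coordinate change; once that pointwise invariance is in hand, the rest is a direct citation of the preceding lemmas and theorems, with the hypotheses on curvature, the $(\sigma,\theta)$-cone condition and the bounds $M$ being inherited from Theorem \ref{thm:small_procrustes_dist_means_almost_conformal} and absorbed into the implicit constant.
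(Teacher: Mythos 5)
Your proposal is correct and follows essentially the same route as the paper, which obtains the theorem precisely by combining Theorem \ref{thm:small_procrustes_dist_means_almost_conformal} with Theorem \ref{thm:dependence_on_Beltrami_coef_2} via the identity $\abs{\varrho}=(dis_\C-1)/(dis_\C+1)$, the reflection extension, and Lemma \ref{lem:barC_satisfy_beltrami}. The invariance of the conformal distortion under the conformal coordinate changes $\Psi$, $\Psi'$ and $m$, which you flag as the one point needing care, is exactly the step the paper leaves implicit, and your justification of it (conformal maps scale both singular values equally) is the right one.
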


The orientation reversing $\C:\M\too\N$ are close to the
anti-M\"{o}bius transformations that can be calculated from the
M\"{o}bius transformations by setting
\begin{equation}\label{e:disk_anti_mobius}
    \overline{m}(z)=m(\bar{z}),
\end{equation}
where $m$ is any M\"{o}bius transformation.

As described earlier, we use this theorem as a guide to build an efficient search algorithm
to compute (an approximation to) $\dpc(\M,\N)$ for surfaces $\M$, $\N$ that are not hugely dissimilar.
Since area-preserving maps $\C$ from $\M$ to $\N$ that are close to minimizing
$\dpc(\M,\N;\C)$ must be close to conformal, we
start by searching $\Mob(\M,\N)$ to find the conformal or
anti-conformal map $m$ that minimizes $\dpc(\M,\N;m)$. We then
transform this $m$ into a nearby
area-preserving diffeomorphism by means of a nonlinear
transform $\pr$, still to be defined below. We expect (but do no prove)
that $\pr(m)$ is then a good approximation to (nearly) minimizing $\C$.
Note that there are no guarantees that
this approximation process, in which we replace
$\A(\M,\N)$ by the proxy $\pr\parr{\Mob(\M,\N)}$, preserves
the triangle inequality property of
$\dcp(\M,\N)$; the approximations we compute therefore result in a
measure of dissimilarity rather then a distance.

}

\section{Searching appropriate M\"{o}bius candidates and massaging them into area-preservation}

In the previous section we showed that it is useful
to first find a M\"{o}bius transformation $m$ for which
$\dpc(\M,\N;m)$ is small; we show in subsection \ref{subsec_4.1} below a practical strategy
for obtaining such candidate M\"{o}bius transformations that is fast and
efficient for our applications. To obtain a better approximation of
the optimal element of $\A(\M,\N)$ from these candidate
$m\in\Mob(\M,\N)$, we will, in subsections \ref{subsec_4.2} and  \ref{subsec_4.3}, 
deform each of them into a nearby
area-preserving map. That is, for every $m\in\Mob(\M,\N)$, we will
construct a map $f_m:\M\too\N$ such that $f_m \circ m \in
\A(\M,\N)$.

\subsection{Searching the M\"{o}bius group.}
\label{subsec_4.1}
By Theorem \ref{thm:MAIN_RESULT_low_cpd_imlpies_close_to_conformal}
we know that an area-preserving diffeomorphism $\C:\M\too \N$ that
produces a small continuous Procrustes distance $d_P(\M,\N;\C)$, is
close to a M\"{o}bius transformation, when written in 
uniformizing coordinates.
\begin{figure}
\includegraphics[width=0.6\columnwidth]{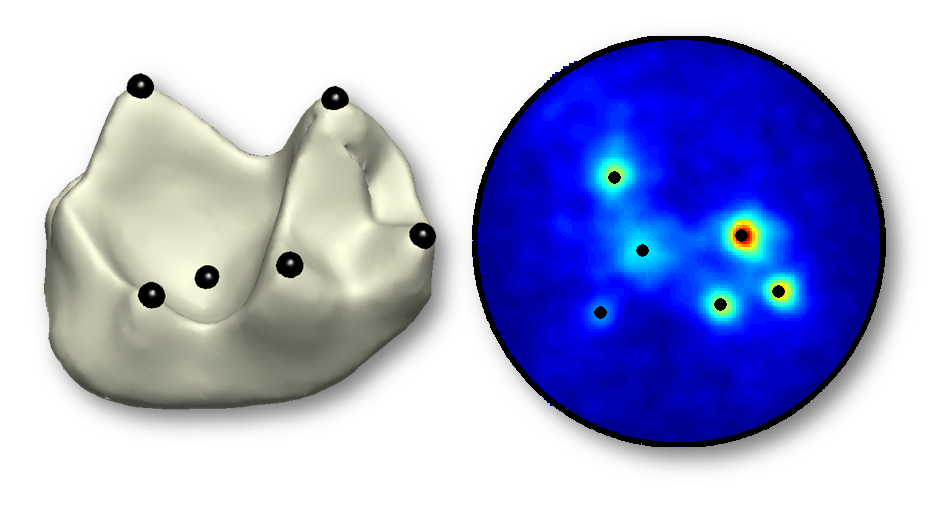}\\
  \caption{A surface (left) and its density function over the uniformization disk.
  The local extremas of the conformal density function are 
  shown as black dots.}\label{fig:density_extrema}
\end{figure}
Hence, we first describe how we search for candidate M\"{o}bius
transformations $m\in\Mob(\M,\N)$ that (we hope) are already close
to area-preserving for our applications. As mentioned above, the
M\"{o}bius group between two disk-type surfaces has three real
degrees of freedom: prescribing the image $w_0\in \D$ of one point
$z_0 \in \D$, as well as one angle $\theta\in[0,2\pi)$, uniquely
defines a disk-preserving M\"{o}bius transformation $m:\D\too\D$. To
speed up the search, we start by determining a mapping for which the
density peaks, i.e. the local extrema of the density $\nu(m(z))$
(more or less) correspond to those of $\mu(z)$. To that end we first
extract, for each surface, a set of extremal points $I_\M,I_\N$
(local maxima and minima) defined by local extrema of the
corresponding density functions $\mu,\nu$, respectively. See Figure
\ref{fig:density_extrema}, where the black points show these
extremal sets. In practice, we find that, in the application (to
bone surfaces) that first motivated us, these points (intimately related
to extrema of Gauss curvature) were likely to contain at least one
pair of corresponding points, across a wide range of examples; this feature has presisted
for other families of examples we examined. Note
that this definition of $I_\M,I_\N$ is not invariant to M\"{o}bius
transformations in the sense that for any M\"{o}bius transformation
the extrema of the pulled-back density $\nu(m(z))\abs{m'(z)}^2$ are
not, in general, the same as the $m^{-1}(w_{\ell})$, where the $w_{\ell}$ are the extrema of $\nu(w)$. 
To make the computation invariant it is sufficient to
search for the extrema of the hyperbolic normalized densities
$(1-\abs{z}^2)^2\mu(z)$ and $(1-\abs{w}^2)^2\nu(w)$ (which are
invariant
to M\"{o}bius change of coordinates).\\
In our algorithm we consider the collection of M\"{o}bius
transformations $m=m(z;\theta,p,q)$ defined by $m(p)=q$ for every
pair $(p,q)\in I_\M \times I_\N$, and every angle $\theta \in
[0,2\pi)$. In order to compute the M\"{o}bius transformations in
practice between two surfaces, we use the algorithm described in
\cite{lipman:2009:mvf,Lipman_Puente_Daubechies:2010:computational}.
Furthermore, we discretize $\theta$:  $\theta = 2\pi\,k/K,
k=0,1,2,...,K-1$. From every candidate M\"{o}bius $m(z;\theta,p,q)$
we build a candidate correspondence map $\C:\M\too \N$ by the steps
described in the next two subsections, deforming it to an
area-preserving $C_m = f_m \circ m$.

One additional remark is that in the above algorithm we also
consider all possible anti-M\"{o}bius transformations
$\wt{m}=\wt{m}(z;\theta,p,q)$ by taking $\wt{m}(z) = m(\bbar{z})$,
where $m$ is a M\"{o}bius transformation, and such that
$\wt{m}(p)=q$ for every pair $(p,q)\in I_\M \times I_\N$, and every
angle $\theta \in [0,2\pi)$.

\subsection{Projection onto $\A(\M,\N)$}
\label{subsec_4.2}
Our goal now is to construct a map $f_m:\M\too\N$ such that $f_m
\circ m \in \A(\M,\N)$, and $f_m$ is (in some sense) ``as close as
possible'' to the identity.

Denote, as before, by $\mu(z),\nu(w)$ the densities of the surfaces
$\M, \, \N$ (resp.) over the unit disk $\D$, defined by $\parr{(m \circ \Psi)_*
g}[z] = \mu(z)\, dz\, d\bbar{z}$, and $\parr{\Psi'_*
h}[w] = \nu(w)\,dw\,d\bbar{w}$. Then, a simple and natural approach
to define $f_m$ is via a ``linear interpolation of the measures'' technique due
to Moser \cite{moser1965}. The key idea is to look at the linear
interpolant $\varsigma_t \, : \, t\mapsto (1-t)\mu + t\,\nu$, $t\in[0,1]$
and to find a corresponding family of diffeomorphisms $\Phi_t$ such
that $(\Phi_t)_*d\mu = d\varsigma_t$. (Here, as before, the $(\cdot)_*$ 
notation, applied to a measure, means
``push-forward'', i.e. $f_*d\mu
=d\nu$ is equivalent to the requirement that, for every Borel set
$\Omega $, $\nu(f(\Omega))=\mu(\Omega)$.) Then the projection is defined
as $f_m := \phi_m = \Phi_1$.

Dacaronga and Moser \cite{Dacarogna1990} used this strategy to
construct an area-preserving map that takes a given density $f$ to a
constant density. We will slightly generalize their formulation to
achieve an area-preserving mapping $\phi_m:\D\too\D$ taking the area
element $d\mu$ to $d\nu$, that is
\begin{equation}\label{e:push_forward_measure_cond}
    (\phi_{m})_*  d\mu = d\nu.
\end{equation}

Other researchers have used Moser's technique to construct an initial
guess in the further elaboration of an area-preserving map that would be
optimal in the sense of mass-transportation cost \cite{Dominitz10}.
Although Monge's mass-transportation provides a
elegant way to construct correspondence maps,
we believe that, because Euclidean (or hyperbolic) distances in the
uniformization plane have no intrinsic meaning for the geometry of
the problem, using them in the present context will not give a more
meaningful answer than the straightforward result of Moser's
procedure. More meaningful would be to use the surfaces' induced
geodesic distances in a mass-transportation approach, but this is a
much more challenging project, which we intend to tackle in future
work.

Since our measures are absolutely continuous w.r.t to the Lebesgue
measures $dz$, $dw$ respectively, we write $d\mu=\mu(z)dz$, $d\nu =
\nu(w) dw$, using the conformal factors $\mu(z),\nu(w)$ as
densities. Using the standard change of variables formula
we see that (\ref{e:push_forward_measure_cond}) can be rewritten, in
terms of the densities, as
\begin{equation}\label{e:volpres_formulated_on_densities}
    \nu(\phi_m(z))\det(\nabla \phi_m) = \mu(z).
\end{equation}

We will be interested in a solution to
(\ref{e:volpres_formulated_on_densities}) that is a diffeomorphism
$\phi_m$ of $ \D$ onto itself;  in particular points on the boundary
of the unit disk should be mapped to the boundary again. For the
remainder of this subsection we will drop the subscript on $\phi_m$,
writing it as $\phi$ for brevity.

Adapting Dacorogna and Moser's procedure \cite{Dacarogna1990} we
define the diffeomorphism $\phi$ by integrating, for $t \in [0,1]$,
a special time dependent vector field $v_t(z)$ (to be defined
below):
\begin{align}\label{e:deformation_phi_t_based_on_v_t}
 \frac{d}{dt}\Phi_t(z) &= v_t(\Phi_t(z)) ,& \text{ for all } t\geq0, \ z \in \D \\
\Phi_0(z) &= z ,& \text{ for all } z \in \mathcal{D}.
\end{align}
The desired map $\phi$ is then the end result of the integration,
$\phi(z)=\Phi_1(z)$. The vector field $v_t$ is defined in three
steps, as follows. We start by solving a Poisson equation with
Neumann boundary conditions,
\begin{align}
 \Delta a &= \mu - \nu ,& \text{in } \mathcal{D} \label{e:eq_for_a}\\
\frac{\partial a}{\partial n} &= 0 ,& \text{on } \partial
\mathcal{D} \,.\label{e:neumann_boundary_cond}
\end{align}
[Note that, unlike Dacorogna and Moser we do not require $\phi (z) =
z$ for $z\in \partial \D$;  we impose only that the boundary of $\D$
be mapped to the boundary -- hence the use of  Neumann instead of
Dirichlet boundary conditions.] Next, a time-{\em{independent}}
vector field $v$ is defined by setting $v(z) = \nabla a(z)$. In the
third step, we define the  time-{\em{dependent}} vector field $v_t$
as
\begin{equation*}
 v_t(z) = \frac{v(z)}{t \cdot \nu(z) + (1-t) \cdot \mu(z)}.
\end{equation*}
Establishing that $\phi(z)=\Phi_1(z)$ provides a solution to
(\ref{e:volpres_formulated_on_densities}) can be done by adapting
Dacorogna and Moser's original proof. For completeness let us
briefly describe the argument. First, we define an auxiliary function:
\begin{equation}\label{lambda}
 \lambda(t,z) = \Big(\det{\nabla \Phi_t(z)}\Big) \,\Big( t \cdot \nu(\Phi_t(z))
 + (1-t) \cdot \mu(\Phi_t(z))\Big) \,;
\end{equation}
as we shall see below, this function satisfies
\begin{equation}\label{lambdader}
 \frac{\partial}{\partial t} \lambda(t,z) = 0.
\end{equation}
For the time derivative of the first factor we refer to
\cite{Dacarogna1990}:
\begin{equation}
 \frac{\partial}{\partial t} (\det{\nabla \Phi_t(z)}) = \det{\nabla
 \Phi_t(z)} \cdot \divergence v_t(\Phi_t(z)).
\end{equation}
Differentiating \eqref{lambda} w.r.t. time gives thus
\begin{align}\label{e:dldt}
 \frac{\partial}{\partial t} \lambda(t,x) =& \det{\nabla \Phi_t} \cdot \divergence v_t(\Phi_t) \cdot \big(t \cdot \nu(\Phi_t) + (1-t) \cdot \mu(\Phi_t) \big)\\
& + \det{\nabla \Phi_t}\Big(\nu(\Phi_t) - \mu(\Phi_t) + \langle
t \nabla \nu(\Phi_t) + (1-t) \nabla
\mu(\Phi_t),\frac{d}{dt}\Phi_t \rangle \Big). \nonumber
\end{align}
By the definition of $v_t$ we obtain
\begin{equation*}
 \divergence v = (\divergence v_t)\, \big( t \cdot \nu + (1-t) \cdot \mu \big) + \langle t \nabla \nu + (1-t) \nabla \mu, v_t \rangle.
\end{equation*}
Together with \eqref{e:deformation_phi_t_based_on_v_t} this leads to
several cancellations in (\ref{e:dldt}), resulting in
\begin{equation}
 \frac{\partial}{\partial t} \lambda(t,z) = \det{\nabla \Phi_t} \big( \divergence v(\Phi_t) + (\nu(\Phi_t)-\mu(\Phi_t))\big).
\end{equation}
Since $v$ is defined as $\nabla a$, and $a$ satisfies
\eqref{e:eq_for_a}, this implies \eqref{lambdader}. Therefore,
\begin{equation*}
 \lambda(0,z) = \lambda(1,z).
\end{equation*}
Because $\Phi_0(z)=z$, we have $\lambda(0,z) = \mu(z)$, so that
we have shown that
\begin{equation*}
 \mu(z) = \det{\nabla \Phi_1(z)} \,\nu(\Phi_1(z)).
\end{equation*}

Finally, it is clear from the Neumann boundary conditions
(\ref{e:neumann_boundary_cond}) that the vector field $v(z)$ and
therefore $v_t(z)$ is tangent to the unit circle at the boundary of
the unit disk, that is $\ip{v_t(z),z}=0$ for all $z\in \partial\D$
and $t\geq 0$. 
This property ensures
that integral curves $\Phi_t(z)$ for $z\in \partial \D$ will stay on
the boundary of the disk for all times $t\geq 0$.

{\bf Implementation details:}
We used the $\mbox{{\sc Matlab}}^{\mbox{\tiny{\sc{tm}}}}$
\textsc{pde toolbox} for all steps. For the first step (solving the
Poisson equation) we used a triangular mesh with regular mesh size.
The two densities are taken to be piecewise constant on the
elements, with constants given by evaluating $\mu$ and $\nu$ at
the midpoints of the triangles, providing the right-hand side of the
PDE. Since the solution $a$ of the PDE is also piecewise constant on
the mesh elements, its gradient field $v = \grad a$ can be
determined on each node of the mesh. By a nearest neighbor
interpolation we approximate $v$ as piecewise constant on the
elements and use this to solve the ODE in the second step. This is
done with a 4-stage Runge-Kutta method. Another implementation
detail is that we add a small constant to the densities to avoid
numerical inabilities for densities that have a minimal value close
to zero.

\subsection{Thin-Plate Splines deformation.}
\label{subsec_4.3}
From a practical point of view we found it desirable to define our projection map
as a composition of {\em two} maps: $f_m=\phi_m\circ \zeta_m$, combining the Moser map
$\phi_m$ defined above with a 
a preliminary smooth planar deformation $\zeta_m$.
The map $\zeta_m$ is used to locally align the peaks and valleys, already brought
close together by the M\"{o}bius transformation $m$. Since
we assume the two surfaces have equal (unit) area,
i.e., $\int_\M d\vol_\M(x) = 1 = \int_\N d\vol_\N(y) $, improving the alignment
of peaks and valleys of the densities leads to less
area distortion. This quick-and-dirty approximation jumpstarts the
transition towards an exact area-preservation; although
true area-preservation is achieved only after the second step of
the deformation, an initial alignment
by means of $\zeta_m$ removes some of the ``workload'' on $\phi_m$. 

For the smooth deformation $\zeta_m$, we use Thin-Plate Splines (TPS).
In a first step, we label the points in $I_\M$ and $I_\N$ as follows. 
We first apply $m$ to the set $I_\M$, determine mutually closest
points (with respect to the hyperbolic distance function) for
the two sets $m\parr{I_\M}$ and $I_\N$, and label them correspondingly, so
that $(p_j,q_j)\in
m\parr{I_\M}\times I_\N, j=1,...,n, $ denote the mutually closest
pairs. In other words, we have
$$d_H(p_j,q_j) < \min \set{ \mathop{\min}_{q_j \ne q\in I_\N} d_H(p_j,q),
\mathop{\min}_{p_j \ne p\in I_\M} d_H(p,q_j)   },$$ where the
hyperbolic distance is
$d_H(p,q)=\tanh^{-1}\abs{\frac{p-q}{1-p\bar{q}}},$ $p,q \in \D$.

Next, we carry out a change of coordinates that maps the unit disk to the whole plane, 
by setting $\chi(z) = \mathrm{atan}(\abs{z})\,z/\abs{z}$ with the inverse
$\chi^{-1}(z) = \mathrm{tan}(\abs{z})\,z/\abs{z}$. Set $P_j=\chi(p_j)$,
$Q_j=\chi(q_j)$, $j=1, \ldots,n$. 
We construct a thin-plate spline function $\zeta_m$ interpolating
the $P_j$ and $Q_j$ in the complex plane, i.e., $\zeta_m(P_j)=Q_j$.
More explicitly,
$$\zeta_m(z) = \chi^{-1} \circ TPS_m \circ \chi,$$
where
$$TPS_m(z) = a_0+a_1 z + a_2 \bar{z} + \sum_{i=1}^n b_i \Upsilon(\abs{z-P_j}),$$
and $\Upsilon(r) = r^2 \log(r)$. The coefficients $a_j,b_i$,
$j=0,1,2, i=1,..,n$ are computed in the standard way by solving an $(n+3)\times (n+3)$
linear system \cite{wendland2005scattered} that imposes 
$TPS_m(P_j)=Q_j$, $j=1, \ldots,n$. ``Sandwiching'' $TPS_m$ by the coordinate transformation 
$\chi$ guarantees that $\zeta_m$ 
takes the disk $\D$ onto itself.

\subsection{Numerical experiments}
Figures \ref{fig:mesh_deformation_for_volpres} and \ref{fig:final_map}  demonstrate different aspects of
the behavior of the algorithm described in the earlier sections.

\begin{figure}[h]
\includegraphics[height=7 cm]{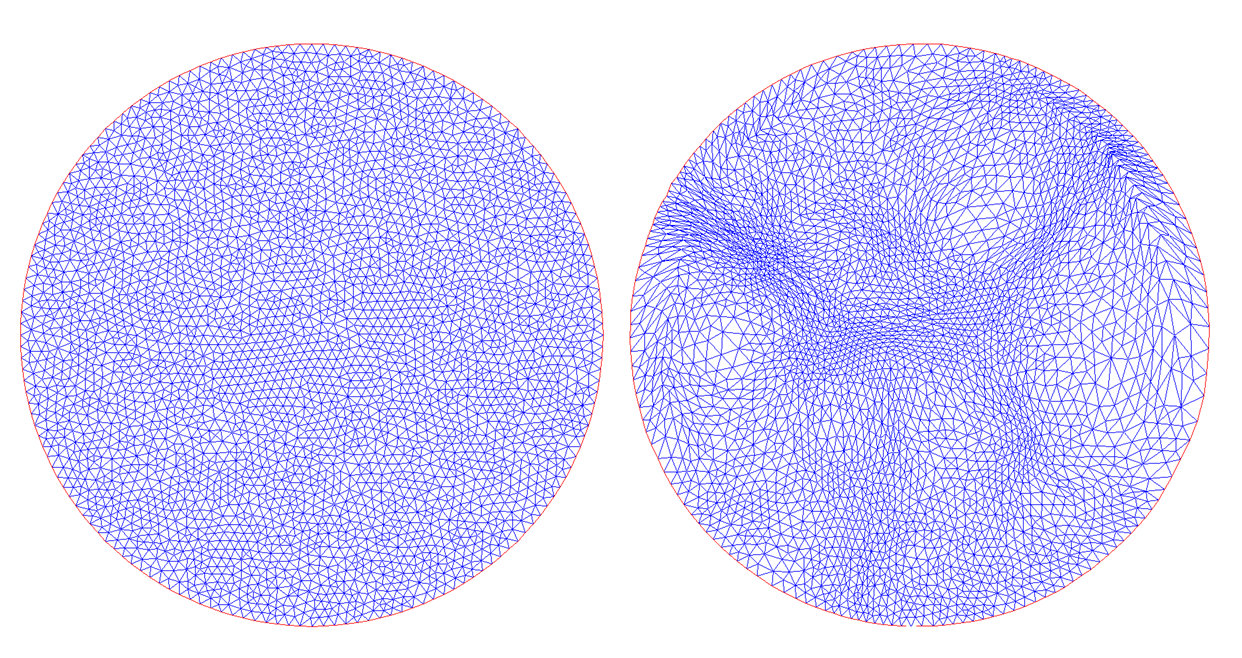} 
  \caption{{\bf Mesh deformation under Moser's procedure.}
Left: a "jiggled" initial mesh; Right: its deformation after solving the PDE on this mesh.}\label{fig:mesh_deformation_for_volpres}
\end{figure}

\begin{figure}[h]
   \includegraphics[width=\textwidth]{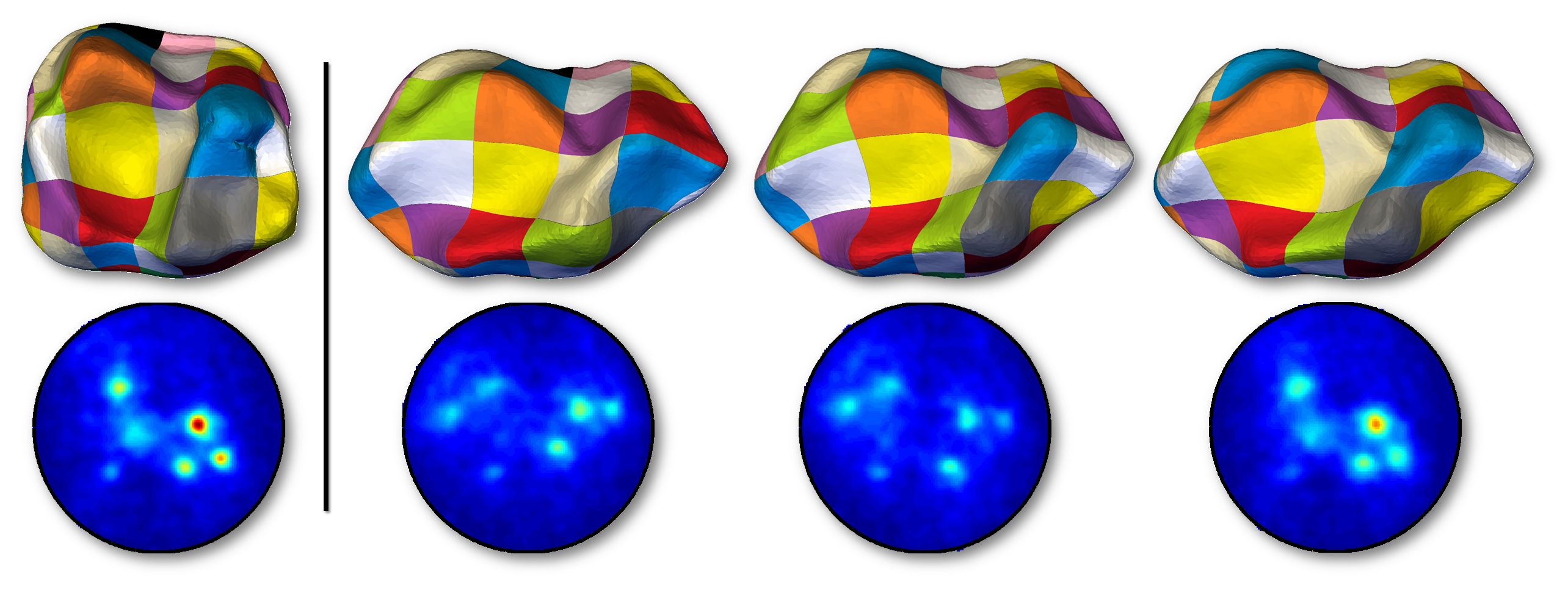} 
\caption{{\bf The different components of the map.}
The left column shows the surface $\N$ (top, with colored squares 
enabling the viewer to track where different 
portions of the surface $\M$ are mapped), and the corresponding conformal factor $\nu$
on the disk $\D$. The different steps of the
algorithm ``at work'' in constructing the map
$\C$ from $\M$ to $\N$ are shown to the right of the vertical black line. 
From left to right: the optimal M\"{o}bius transformation
$m$, which gives a uniformization of $\M$, with conformal factor $\mu$; 
the optimal alignment of the peaks in $\mu$, via TPS, with those of $\nu$;
the transformation into a truly area-preserving map via Moser's technique. 
}\label{fig:final_map}  
\end{figure}

In the companion paper \cite{pnas} an extensive analysis is
performed for three biological data-sets, comparing the results of several
algorithms to define the (dis)similarity between surfaces with those obtained by human experts. 
One of the methods illustrated in \cite{pnas} uses the algorithm described here, and 
we refer the interested reader to that paper for many more figures and results. 
(In the interest of full disclosure, we confess that in many of the examples 
in \cite{pnas} that used continuous Procrustes distances, we skipped the 
last step in $\pr$: the combination of an optimal M\"{o}bius
transformation and TPS already gave results that were very close to area-preserving, and
sufficed for the application at hand, so that we could skip the more time-consuming
Moser transformation.)

\section*{Acknowledgments}
The authors gratefully acknowledge partial support by NSF and AFOSR grants. 
\bibliographystyle{amsplain}
\bibliography{surface_procrustes}

\providecommand{\bysame}{\leavevmode\hbox to3em{\hrulefill}\thinspace}
\providecommand{\MR}{\relax\ifhmode\unskip\space\fi MR }
\providecommand{\MRhref}[2]{%
  \href{http://www.ams.org/mathscinet-getitem?mr=#1}{#2}
}
\providecommand{\href}[2]{#2}
\begin{thebibliography}{10}

\bibitem{ahlfors1966lectures}
L.V. Ahlfors, \emph{Lectures on quasiconformal mappings}, University {L}ecture
  {S}eries, American Mathematical Society, 1966.

\bibitem{Dacarogna1990}
Dacarogna B. and Moser J., \emph{On a partial differential equation involving
  the {J}acobian determinant}, Annales de l'Institut Henri Poincar\'e (1990).

\bibitem{Besl92}
Paul~J. Besl and Neil~D. McKay, \emph{A method for registration of 3-d shapes},
  IEEE Trans. Pattern Anal. Mach. Intell. \textbf{14} (1992), no.~2, 239--256.

\bibitem{pnas}
Doug~M. Boyer, Yaron Lipman, Elizabeth~St. Clair, Jesus Puente, Thomas~A.
  Funkhouser, Biren~A. Patel, Jukka Jernvall, and Ingrid Daubechies, \emph{New
  algorithms to automatically quantify the geometric similarity of anatomical
  surfaces}, Submitted.

\bibitem{Dominitz10}
Ayelet Dominitz and Allen Tannenbaum, \emph{Texture mapping via optimal mass
  transport}, IEEE Transactions on Visualization and Computer Graphics
  \textbf{16} (2010), 419--433.

\bibitem{Eggert97}
D.~W. Eggert, A.~Lorusso, and R.~B. Fisher, \emph{Estimating 3-d rigid body
  transformations: a comparison of four major algorithms}, Mach. Vision Appl.
  \textbf{9} (1997), no.~5-6, 272--290.

\bibitem{eldar97farthest}
Y.~Eldar, M.~Lindenbaum, M.~Porat, and Y.~Zeevi, \emph{The farthest point
  strategy for progressive image sampling}, 1997.

\bibitem{ghosh:2009}
Deboshmita Ghosh, Andrei Sharf, and Nina Amenta, \emph{Feature-driven
  deformation for dense correspondence}, vol. 7261, SPIE, 2009, p.~726136.

\bibitem{imayoshi1992introduction}
Y.~Imayoshi and M.~Taniguchi, \emph{An introduction to {T}eichm{\"u}ller
  spaces}, Springer-Verlag, 1992.

\bibitem{lipman_at_nordia_2010}
Yaron Lipman, \emph{Keynote talk: On surface comparison and symmetry}, Third
  Workshop on Non-Rigid Shape Analysis and Deformable Image Alignment (in
  conjunction with CVPR'10)
  (http://tosca.cs.technion.ac.il/nordia10/program.html), {J}une 2010.

\bibitem{lipman:2009:mvf}
Yaron Lipman and Thomas Funkhouser, \emph{M{\"{o}}bius voting for surface
  correspondence}, ACM Transactions on Graphics (Proc. SIGGRAPH) \textbf{28}
  (2009), no.~3.

\bibitem{Lipman_Puente_Daubechies:2010:computational}
Yaron Lipman, Jesus Puente, and Ingrid Daubechies, \emph{Conformal
  {W}asserstein distance: {\textrm{II}}. {C}omputational aspects and
  extensions}, Mathematics of Computation, accepted for publication (2011).

\bibitem{Mitteroecker09}
Philipp Mitteroecker and Philipp Gunz, \emph{Advances in geometric
  morphometrics}, Journal Evolutionary Biology \textbf{36} (2009), no.~2,
  235--247.

\bibitem{moser1965}
J\"{u}rgen Moser, \emph{On the volume elements on a manifold}, Transactions of
  the American Mathematical Society \textbf{120} (1965), no.~2, pp. 286--294
  (English).

\bibitem{Rangarajan97thesoftassign}
Anand Rangarajan, Haili Chui, and Fred~L. Bookstein, \emph{The {S}oftassign
  {P}rocrustes {M}atching {A}lgorithm}, Information Processing in Medical
  Imaging, Springer, 1997, pp.~29--42.

\bibitem{Smolyanov00}
O.~G. Smolyanov, H.~von Weizs\"{a}cker, and O.~Wittich, \emph{Brownian motion
  on a manifold as limit of stepwise conditioned standard brownian motions},
  Stochastic processes, physics and geometry: new interplays, II (2000).

\bibitem{spivak1999comprehensive_vol2}
M.~Spivak, \emph{A comprehensive introduction to differential geometry}, A
  Comprehensive Introduction to Differential Geometry, no. v. 2, Publish or
  Perish, Inc., 1999.

\bibitem{wendland2005scattered}
H.~Wendland, \emph{Scattered data approximation}, Cambridge monographs on
  applied and computational mathematics, Cambridge University Press, 2005.

\end{thebibliography}
\end{document}